\documentclass[11pt,reqno]{article}
\usepackage{amsmath,amssymb,amsthm,amsfonts,amstext,amsbsy,amscd}
\usepackage[utf8]{inputenc}
\usepackage{bbm,dsfont}
\usepackage{xcolor}
\usepackage{comment}
\usepackage[colorlinks,citecolor=blue,urlcolor=blue,linkcolor = blue,hypertexnames=false]{hyperref}

\theoremstyle{plain}
\newtheorem{thm}{Theorem}
\newtheorem{proposition}{Proposition}
\newtheorem{corollary}{Corollary}
\newtheorem{lemma}{Lemma}
\newtheorem{assumption}{Assumption}

\theoremstyle{definition}

\theoremstyle{remark}
\newtheorem{remark}{Remark}

\newtheorem{example}{Example}

\DeclareMathOperator{\R}{{\mathbb R}}

\DeclareMathOperator{\tr}{tr}

\DeclareMathOperator{\argmin}{arg\,min}

\newcommand{\SGN}{L}
\newcommand{\CEV}{C_{ev}}
\newcommand{\EstPCR}{\hat f_d^{PCR}}
\newcommand{\EstORA}{\hat f_d^{oracle}}

\renewcommand{\le}{\leqslant}
\renewcommand{\ge}{\geqslant}

\numberwithin{equation}{section}



\allowdisplaybreaks[4]

\begin{document}
\title{A note on the prediction error of principal component regression in high dimensions}
\author{Laura Hucker\thanks{Humboldt-Universit\"{a}t zu Berlin, Germany. E-mail: huckerla@math.hu-berlin.de} \qquad Martin Wahl\thanks{Universit\"{a}t Bielefeld, Germany. E-mail: martin.wahl@math.uni-bielefeld.de
\newline
\textit{2010 Mathematics Subject Classification.} 62H25\newline
\textit{Key words and phrases.} Principal component regression, Prediction error, Principal component analysis, Excess risk, Eigenvalue upward bias, Benign overfitting.}}
\date{}

\maketitle

\begin{abstract}
We analyze the prediction error of principal component regression (PCR) and prove high probability bounds for the corresponding squared risk conditional on the design. Our first main result shows that PCR performs comparably to the oracle method obtained by replacing empirical principal components by their population counterparts, provided that an effective rank condition holds. On the other hand, if the latter condition is violated, then empirical eigenvalues start to have a significant upward bias, resulting in a self-induced regularization of PCR. Our approach relies on the behavior of empirical eigenvalues, empirical eigenvectors and the excess risk of principal component analysis in high-dimensional regimes.
\end{abstract}


\section{Introduction}
Let $(\mathcal{H},\langle\cdot,\cdot\rangle)$ be a separable Hilbert space and let $(Y,X)$ be a pair of random variables satisfying the regression equation
\begin{equation}\label{EqModel}
Y=\langle f,X\rangle+\epsilon,
\end{equation}
where $f\in\mathcal{H}$, $X$ is a random variable with values in $\mathcal{H}$ such that $\mathbb{E}X=0$ and $\mathbb{E}\|X\|^2<\infty$, and $\epsilon$ is a real-valued random variable such that $\mathbb{E}(\epsilon\,|\,X)=0$ and $\mathbb{E}(\epsilon^2\,|\,X)=\sigma^2$. We suppose that we observe $n$ independent copies $(Y_1,X_1),\dots,(Y_n,X_n)$ of $(Y,X)$ and consider the problem of estimating $f$.

Allowing for general Hilbert spaces $\mathcal{H}$, the statistical model \eqref{EqModel} covers several regression problems. It includes a special case of the functional linear model, in which the responses are scalars and the covariates are curves. This model has been extensively studied in the literature (see, e.g., the monographs by Horv\'{a}th and Kokoszka \cite{HK12} and Hsing and Eubank \cite{HE15}). It also includes kernel-based learning problems. For instance, nonparametric regression with random design can be written in the form \eqref{EqModel}, provided that the regression function is contained in a reproducing kernel Hilbert space. This connection between statistical learning theory and the theory of linear inverse problems has been developed e.g.~in De Vito et al.~\cite{VRC05}. 

In this paper, we focus on estimating $f$ by principal component regression (PCR). PCR is a widely known estimation procedure, in which principal component analysis (PCA) is applied in a first step to reduce the high dimensionality of the data. Then, in a second step, an estimator of $f$ is obtained by regressing the responses on the leading empirical principal components. 

Our main goal is to show that the behavior of the prediction error of PCR crucially depends on an effective rank condition. On the one hand, if this condition is satisfied, then PCR performs comparably to the oracle estimator obtained by replacing empirical principal components by their population counterparts. On the other hand, if this condition is violated, then the eigenvalues of the empirical covariance operator start to have a significant upward bias, resulting in a regularization effect of the PCR estimator compared to the corresponding oracle estimator.

A general account on PCR is given in the monograph by Jolliffe \cite{J02}. For a study of the prediction error of PCR with focus on minimax optimal rates of convergence under standard model assumptions, see Hall and Horowitz \cite{HH07} and Brunel, Mas, and Roche \cite{BMR16} for the functional data context and Lu and Pereverzev \cite{MR3114700} and Blanchard and M\"{u}cke \cite{BM18} for the statistical learning context. Note that the latter study PCR (resp.~spectral cut-off) within a larger class of spectral regularization methods. In an overparametrized regime, the $n$th PCR estimator corresponds to the minimum-norm least squares estimator, which can predict well although it interpolates the data, see, e.g., Bartlett et al.~\cite{MR4263288}, Tsigler and Bartlett \cite{TB20} and Mei, Misiakiewicz, and Montanari \cite{MR4412180}. One explanation of this so-called Benign overfitting phenomenon is given by the fact that the overparametrized component results in a self-induced regularization (see, e.g., Bartlett, Montanari, and Rakhlin~\cite{MR4295218}). 

In settings where the number of observations is comparable to the dimension, it has been shown that high-dimensional phenomena occur in PCA, ranging from eigenvalue upward bias to eigenvector delocalization. While these phenomena are now well understood in the spiked covariance model (see, e.g., Nadler \cite{N08}, Paul \cite{P07},  Benaych-Georges and Nadakuditi \cite{MR2782201} and Bloemendal et al.~\cite{MR3449395}), the results obtained in this paper rely on nonasymptotic extensions to general eigenvalue settings (see, e.g.,~Rei\ss{} and Wahl \cite{RW17}, Jirak and Wahl \cite{MR4517351} and Bartlett et al.~\cite{MR4263288}).

\subsection{Basic notation}\label{BasicNotation} 

Let $\|\cdot\|$ denote the norm on $\mathcal{H}$.  Given a bounded (resp.~Hilbert-Schmidt) operator $A$ on $\mathcal{H}$, we denote the operator norm (resp.~the Hilbert-Schmidt norm) of $A$ by $\|A\|_\infty$ (resp.~$\|A\|_2$). Given a trace class operator $A$ on $\mathcal{H}$, we denote the trace of $A$ by $\tr(A)$. For $g\in\mathcal{H}$, the empirical and the $L^2(\mathbb{P}^X)$ norm are defined by $\|g\|_{n}^2=n^{-1}\sum_{i=1}^n\langle g,X_i\rangle^2$ and $\|g\|_{L^2(\mathbb{P}^X)}^2=\mathbb{E}\langle g,X\rangle^2$, respectively. For $g,h\in\mathcal{H}$, we denote by $g\otimes h$ the rank-one operator on $\mathcal{H}$ defined by $(g\otimes h)x=\langle h,x\rangle g$, $x\in \mathcal{H}$. We write $\mathbf{Y}=(Y_1,\dots,Y_n)^T$ and $\boldsymbol{\epsilon}=(\epsilon_1,\dots,\epsilon_n)^T$. Throughout the paper, $c\in (0,1)$ and $C>1$ denote constants that may change from line to line (by a numerical value). If no further dependencies are mentioned, then these constants are absolute. For a real-valued random variable $Z$ and $\alpha>0$, we define the norm $\|Z\|_{\psi_\alpha}=\sup_{p\ge 1}p^{-1/\alpha}\mathbb{E}^{1/p}|Z|^p$. For $\alpha\ge 1$, these norms are equivalent to the Orlicz norms.


\section{Main results}

In this section, we present our main bounds for the prediction error of principal component regression. All results are proved in Section \ref{SecProofs} below. 

\subsection{Principal component analysis}\label{SecPCA}

The covariance operator of $X$ is denoted by $\Sigma$. Since $X$ is assumed to be centered and strongly square-integrable (meaning that $\mathbb{E}X=0$ and $\mathbb{E}\|X\|^2<\infty$), the covariance operator $\Sigma$ can be defined by $\Sigma=\mathbb{E}(X\otimes X)$, where expectation is taken in the Hilbert space of all Hilbert-Schmidt operators on $\mathcal{H}$. It is well-known that under the above assumptions, the covariance operator $\Sigma$ is a positive, self-adjoint trace class operator (see, e.g., \cite[Theorem 7.2.5]{HE15}). By the spectral theorem, there exists a sequence $\lambda_1\ge \lambda_2\ge\dots>0$ of  positive eigenvalues (which is either finite or converges to zero) together with an orthonormal system of eigenvectors $u_1,u_2,\dots$ such that $\Sigma$ has the spectral decomposition
\begin{equation*}
\Sigma=\sum_{j\ge 1}\lambda_j P_j,\qquad P_j=u_j\otimes u_j.
\end{equation*}
Without loss of generality we shall assume that the eigenvectors $u_1,u_2,\dots$ form an orthonormal basis of $\mathcal{H}$ such that $\sum_{j\ge 1}P_j=I$. Moreover, let
\begin{equation*}
\hat{\Sigma}=\frac{1}{n}\sum_{i=1}^nX_i\otimes X_i
\end{equation*}
be the empirical covariance operator. Again, there exists a sequence $\hat{\lambda}_1\ge \hat{\lambda}_2\ge\dots\ge 0$ of eigenvalues together with an orthonormal basis of eigenvectors $\hat{u}_1,\hat{u}_2,\dots$ such that we can write
\begin{equation*}
\hat{\Sigma}=\sum_{j\ge 1}\hat{\lambda}_j \hat{P}_j,\qquad \hat{P}_j=\hat{u}_j\otimes\hat{u}_j.
\end{equation*}
For $d\ge 1$, we write
\[
P_{\le d}=\sum_{j\le d}P_j,\quad P_{>d}=\sum_{k>d}P_k, \qquad \hat{P}_{\le d}=\sum_{j\le d}\hat{P}_j,\quad \hat{P}_{> d}=\sum_{k> d}\hat{P}_k
\]
for the orthogonal projections onto the linear subspace spanned by the first $d$ eigenvectors of $\Sigma$ (resp.~$\hat \Sigma$) and onto its orthogonal complement. Similarly, we use the abbreviations $\Sigma_{>d}$, $\tr_{>d}(\Sigma)$ and $\tr_{>d}(\Sigma^2)$ for $\sum_{k>d} \lambda_k P_k$, $\sum_{k>d}\lambda_k$ and $\sum_{k>  d}\lambda_k^2$, respectively. Introducing 
\begin{equation*}
    {\mathcal P}_{d}=\{P:{\mathcal H}\to{\mathcal H}\,|\, P\text{ is orthogonal projection of rank }d\},
\end{equation*}
the reconstruction error of $P\in{\mathcal P}_{d}$ is defined by
\begin{equation*}
R(P)=\mathbb{E}\|X-PX\|^2.
\end{equation*}
The idea behind PCA is that $P_{\le d}$ satisfies
\begin{equation*}
P_{\le d}\in\argmin_{P\in {\mathcal P}_{d}}\limits R(P)\quad\text{and}\quad R(P_{\le d})=\tr_{>d}(\Sigma).
\end{equation*}
Similarly, the empirical reconstruction error of $P\in{\mathcal P}_{d}$ is defined by
\begin{equation*}
R_n(P)=\frac{1}{n}\sum_{i=1}^n \|X_i-PX_i\|^2,
\end{equation*}
and we have
\begin{equation*}
\hat{P}_{\le d}\in\argmin_{P\in {\mathcal P}_{d}}\limits R_n(P).
\end{equation*}
The excess risk of the PCA projector $\hat{P}_{\le d}$ is defined by
\begin{equation*}
{\mathcal E}^{PCA}_d=R(\hat{P}_{\le d})-\min_{P\in {\mathcal P}_{d}}R(P)=R(\hat{P}_{\le d})-R( P_{\le d}).
\end{equation*}
It has been shown in \cite[Lemma 2.6]{RW17} that, for every $\mu\in\R$, the excess risk of PCA can be represented as
\begin{align*}
{\mathcal E}^{PCA}_d={\mathcal E}^{PCA}_{\le d}(\mu)+{\mathcal E}^{PCA}_{> d}(\mu)
\end{align*}
with 
\begin{equation*}
{\mathcal E}^{PCA}_{\le d}(\mu)=\sum_{j\le d}(\lambda_j-\mu)\| P_j\hat{P}_{> d}\|_2^2
,\quad
{\mathcal E}^{PCA}_{> d}(\mu)=\sum_{k>d}(\mu-\lambda_{k})\| P_k\hat{P}_{\le d}\|_2^2.
\end{equation*} 
The excess risk is a natural measure of accuracy of the PCA projector $\hat P_{\le d}$ for many tasks like reconstruction and prediction (see, e.g., \cite{RW17}).

\subsection{Principal component regression}\label{SecPCR}

Let $\hat U_d=\operatorname{span}(\hat u_1,\dots,\hat u_d)$ be the linear subspace spanned by the first $d$ eigenvectors of $\hat \Sigma$. Then the PCR estimator in dimension $d$ is defined by
\begin{equation*}
\EstPCR\in\argmin_{g\in \hat U_d}\limits\|\mathbf{Y}-S_ng\|_n^2
\end{equation*}
with sampling operator $S_n:\mathcal{H}\rightarrow \mathbb{R}^n,h\mapsto (\langle h,X_i\rangle)_{i=1}^n$.
The PCR estimator can be defined explicitly by using the singular value decomposition of $S_n$. In fact, writing
\[
n^{-1/2}S_n=\sum_{j=1}^n\hat\lambda_j^{1/2}\hat v_j\otimes\hat u_j
\]
with orthonormal basis $\hat v_1,\dots, \hat v_n$ of $\mathbb{R}^n$ with respect to the Euclidean inner product on $\mathbb{R}^n$, also denoted by $\langle\cdot,\cdot\rangle$ in what follows (use that $n^{-1}S_n^*S_n=\hat\Sigma$ with adjoint operator $S_n^*:\mathbb{R}^n\rightarrow \mathcal{H}, v\mapsto \sum_{i=1}^nv_iX_i$), we have
\begin{equation}\label{EqRepPCREst}
\EstPCR=n^{-1/2}\sum_{j\le d}\hat\lambda_j^{-1/2}\langle \mathbf{Y},\hat{v}_j\rangle \hat u_j,
\end{equation}
provided that $\hat \lambda_d>0$. Our main goal is to analyze the squared prediction error of PCR given by
\begin{equation}\label{EqPredLoss}
\|\EstPCR-f\|_{L^2(\mathbb{P}^X)}^2=\langle \EstPCR-f,\Sigma(\EstPCR-f)\rangle.
\end{equation}
The following bias-variance decomposition of the mean squared prediction error of PCR conditional on the design is the starting point for our results.
\begin{lemma}\label{LemBVDecCond} If $\hat\lambda_ d> 0$, then we have
\begin{align*}
\mathbb{E}\big(\|\EstPCR-f\|_{L^2(\mathbb{P}^X)}^2\,\big|\,X_1,\dots,X_n\big)=B_d^2(f)+V_d
\end{align*}
with 
\begin{align*}
B_d^2(f)=\|\hat P_{>d}f\|_{L^2(\mathbb{P}^X)}^2\quad\text{and}\quad V_d=\frac{\sigma^2}{n}\sum_{j\le d}\hat\lambda_j^{-1}\|\hat u_j\|_{L^2(\mathbb{P}^X)}^2.
\end{align*}
\end{lemma}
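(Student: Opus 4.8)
The plan is to start from the explicit representation \eqref{EqRepPCREst} of $\EstPCR$ and to split the response vector according to the regression equation \eqref{EqModel} as $\mathbf{Y}=S_nf+\boldsymbol{\epsilon}$, where $(S_nf)_i=\langle f,X_i\rangle$. First I would compute
\[
\langle\mathbf{Y},\hat v_j\rangle=\langle S_nf,\hat v_j\rangle+\langle\boldsymbol{\epsilon},\hat v_j\rangle=\langle f,S_n^*\hat v_j\rangle+\langle\boldsymbol{\epsilon},\hat v_j\rangle
\]
and invoke the singular value decomposition $n^{-1/2}S_n=\sum_j\hat\lambda_j^{1/2}\hat v_j\otimes\hat u_j$, which upon taking adjoints gives $n^{-1/2}S_n^*=\sum_j\hat\lambda_j^{1/2}\hat u_j\otimes\hat v_j$ and hence $S_n^*\hat v_j=n^{1/2}\hat\lambda_j^{1/2}\hat u_j$ for every $j$ with $\hat\lambda_j>0$, in particular for all $j\le d$ since $\hat\lambda_d>0$. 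Substituting this back into \eqref{EqRepPCREst}, the deterministic part simplifies (the factors $n^{-1/2}\hat\lambda_j^{-1/2}$ and $n^{1/2}\hat\lambda_j^{1/2}$ cancel), and one obtains the decomposition
\[
\EstPCR=\hat P_{\le d}f+Z,\qquad Z:=n^{-1/2}\sum_{j\le d}\hat\lambda_j^{-1/2}\langle\boldsymbol{\epsilon},\hat v_j\rangle\hat u_j,
\]
so that $\EstPCR-f=-\hat P_{>d}f+Z$, a sum of a term that is a deterministic function of $X_1,\dots,X_n$ and a term linear in the noise vector $\boldsymbol{\epsilon}$.

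Next I would expand the squared prediction error \eqref{EqPredLoss} along this decomposition,
\[
\|\EstPCR-f\|_{L^2(\mathbb{P}^X)}^2=\|\hat P_{>d}f\|_{L^2(\mathbb{P}^X)}^2-2\langle\hat P_{>d}f,\Sigma Z\rangle+\langle Z,\Sigma Z\rangle,
\]
and take the conditional expectation given $X_1,\dots,X_n$. Conditionally, the quantities $\hat\lambda_j,\hat u_j,\hat v_j$ and $\hat P_{>d}f$ are deterministic, while $\mathbb{E}(\boldsymbol{\epsilon}\mid X_1,\dots,X_n)=0$ and $\mathbb{E}(\boldsymbol{\epsilon}\boldsymbol{\epsilon}^T\mid X_1,\dots,X_n)=\sigma^2 I_n$; the latter follows from $\mathbb{E}(\epsilon^2\mid X)=\sigma^2$ together with the independence of the pairs $(Y_i,X_i)$, which makes the off-diagonal entries vanish. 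Consequently $\mathbb{E}(Z\mid X_1,\dots,X_n)=0$, the cross term drops out, and it remains to evaluate $\mathbb{E}(\langle Z,\Sigma Z\rangle\mid X_1,\dots,X_n)$.

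For the variance term I would expand $\langle Z,\Sigma Z\rangle=n^{-1}\sum_{j,k\le d}\hat\lambda_j^{-1/2}\hat\lambda_k^{-1/2}\langle\boldsymbol{\epsilon},\hat v_j\rangle\langle\boldsymbol{\epsilon},\hat v_k\rangle\langle\hat u_j,\Sigma\hat u_k\rangle$ and use $\mathbb{E}(\langle\boldsymbol{\epsilon},\hat v_j\rangle\langle\boldsymbol{\epsilon},\hat v_k\rangle\mid X_1,\dots,X_n)=\sigma^2\langle\hat v_j,\hat v_k\rangle=\sigma^2\delta_{jk}$ by orthonormality of $\hat v_1,\dots,\hat v_n$. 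Only the diagonal terms survive, leaving
\[
\mathbb{E}(\langle Z,\Sigma Z\rangle\mid X_1,\dots,X_n)=\frac{\sigma^2}{n}\sum_{j\le d}\hat\lambda_j^{-1}\langle\hat u_j,\Sigma\hat u_j\rangle=\frac{\sigma^2}{n}\sum_{j\le d}\hat\lambda_j^{-1}\|\hat u_j\|_{L^2(\mathbb{P}^X)}^2=V_d,
\]
which together with $B_d^2(f)=\|\hat P_{>d}f\|_{L^2(\mathbb{P}^X)}^2$ is the claimed identity.

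There is no substantial obstacle here; the proof is essentially bookkeeping. The only points that require a little care are the manipulation of the adjoint $S_n^*$ via the singular value decomposition (and using $\hat\lambda_d>0$ so that all inverse powers $\hat\lambda_j^{-1/2}$, $j\le d$, are well defined and $\hat v_1,\dots,\hat v_d$ genuinely extend to an orthonormal basis of $\mathbb{R}^n$), and the justification that the conditional covariance matrix of $\boldsymbol{\epsilon}$ equals $\sigma^2 I_n$ rather than merely having diagonal $\sigma^2$, which is where the independence across samples enters.
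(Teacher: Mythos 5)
Your proof is correct and follows essentially the same route as the paper: the decomposition $\EstPCR=\hat P_{\le d}f+n^{-1/2}\sum_{j\le d}\hat\lambda_j^{-1/2}\langle\boldsymbol{\epsilon},\hat v_j\rangle\hat u_j$ obtained from the SVD of $S_n$, expansion of the $L^2(\mathbb{P}^X)$ loss, and the observation that conditionally on the design the $\langle\boldsymbol{\epsilon},\hat v_j\rangle$ are centered and uncorrelated with variance $\sigma^2$. You merely spell out in more detail the cancellation via $S_n^*\hat v_j=n^{1/2}\hat\lambda_j^{1/2}\hat u_j$ and the role of independence across samples in getting $\mathbb{E}(\boldsymbol{\epsilon}\boldsymbol{\epsilon}^T\mid X_1,\dots,X_n)=\sigma^2 I_n$, which the paper states more briefly.
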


\subsection{The oracle estimator}

Letting $U_d=\operatorname{span}(u_1,\dots,u_d)$, we define the oracle PCR estimator in dimension $d$ by
\[
\EstORA\in\argmin_{g\in  U_d}\limits\|\mathbf{Y}-S_ng\|_n^2.
\] 
The oracle PCR estimator has an analogous decomposition as in \eqref{EqRepPCREst}, with empirical eigenvalues and eigenvectors replaced by those corresponding to the projected observations $P_{\le d}X_i$, cf.~\eqref{EqDecOracleEst} below. 
 
\begin{assumption}\label{SubGauss}
Suppose that $X$ is sub-Gaussian, meaning that there is a constant $\SGN$ such that
\[
\forall g\in\mathcal{H},\quad \|\langle g,X\rangle\|_{\psi_2} \le \SGN\mathbb{E}^{1/2}\langle g,X\rangle^2.
\]
\end{assumption}
The oracle PCR estimator satisfies the following standard risk bound, see, e.g., \cite[Chapter 11]{GKKW02} for similar results for the linear least squares estimator in random design or~\cite{WahlCelisse}.

\begin{proposition}\label{PropOracleRisk} Grant Assumption \ref{SubGauss}. Then there are constants $c,C>0$ depending only on $\SGN$ such that
\[
\forall d\le cn,\quad\mathbb{E}(\|\EstORA-f\|_{L^2(\mathbb{P}^X)}^2\,|\,X_1,\dots,X_n)\le C\Big(\sum_{k>d}\lambda_k\langle f,u_k\rangle^2+\frac{\sigma^2d}{n}\Big)
\]
with probability at least $1-Ce^{-cn}$.  
\end{proposition}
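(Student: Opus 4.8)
The plan is to treat $\EstORA$ as ordinary least squares on the fixed $d$-dimensional subspace $U_d$ and to control the two resulting error contributions --- a variance term and an approximation (bias) term --- by a single good event on which the restricted sample covariance is well-conditioned. Write $\Sigma_d$ and $\hat\Sigma_d$ for the restrictions of $P_{\le d}\Sigma P_{\le d}$ and $P_{\le d}\hat\Sigma P_{\le d}$ to $U_d$; since $\lambda_d>0$, $\Sigma_d=\sum_{j\le d}\lambda_j P_j$ is invertible on $U_d$ and commutes with every $P_k$. On the event that $\hat\Sigma_d$ is invertible, using $\mathbf Y=S_nf+\boldsymbol\epsilon$, $f=P_{\le d}f+P_{>d}f$, $S_n^*S_n=n\hat\Sigma$ and the normal equations for $\EstORA$, one obtains the decomposition (the oracle analogue of \eqref{EqRepPCREst})
\begin{equation*}
\EstORA-f=-P_{>d}f+\hat\Sigma_d^{-1}P_{\le d}\hat\Sigma P_{>d}f+\tfrac1n\hat\Sigma_d^{-1}P_{\le d}S_n^*\boldsymbol\epsilon .
\end{equation*}
Taking $\mathbb E(\cdot\,|X_1,\dots,X_n)$ and using $\mathbb E(\boldsymbol\epsilon|X)=0$, $\mathbb E(\boldsymbol\epsilon\boldsymbol\epsilon^T|X)=\sigma^2 I_n$, together with $P_{\le d}\Sigma P_{>d}=0$ (so the first summand is $\Sigma$-orthogonal to $U_d$), the cross terms vanish and
\begin{equation*}
\mathbb E\big(\|\EstORA-f\|_{L^2(\mathbb P^X)}^2\,\big|\,X\big)=\sum_{k>d}\lambda_k\langle f,u_k\rangle^2+\|\hat\Sigma_d^{-1}P_{\le d}\hat\Sigma P_{>d}f\|_{L^2(\mathbb P^X)}^2+\frac{\sigma^2}{n}\operatorname{tr}_{U_d}\!\big(\hat\Sigma_d^{-1}\Sigma_d\big),
\end{equation*}
so it remains to bound the last two terms.

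Set $\tilde Z_i=\Sigma_d^{-1/2}P_{\le d}X_i\in U_d$; by Assumption \ref{SubGauss} these are i.i.d.\ isotropic vectors in the $d$-dimensional space $U_d$ whose one-dimensional marginals have $\psi_2$-norm $\le\SGN$. By the standard non-asymptotic bound on sample covariance matrices of isotropic sub-Gaussian vectors, there is $c>0$ depending only on $\SGN$ such that for $d\le cn$ the event
\begin{equation*}
\mathcal A=\Big\{\,\big\|\Sigma_d^{-1/2}\hat\Sigma_d\Sigma_d^{-1/2}-I_{U_d}\big\|_\infty\le\tfrac12\,\Big\}
\end{equation*}
has probability at least $1-2e^{-cn}$. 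On $\mathcal A$, $\hat\Sigma_d$ is invertible, $\|\Sigma_d^{1/2}\hat\Sigma_d^{-1}\Sigma_d^{1/2}\|_\infty\le2$, hence $\operatorname{tr}_{U_d}(\hat\Sigma_d^{-1}\Sigma_d)=\operatorname{tr}_{U_d}(\Sigma_d^{1/2}\hat\Sigma_d^{-1}\Sigma_d^{1/2})\le2d$, which bounds the variance term by $2\sigma^2d/n$.

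For the approximation term, on $\mathcal A$ we have $\|\hat\Sigma_d^{-1}P_{\le d}\hat\Sigma P_{>d}f\|_{L^2(\mathbb P^X)}^2=\|\Sigma_d^{1/2}\hat\Sigma_d^{-1}P_{\le d}\hat\Sigma P_{>d}f\|^2\le4\|w\|^2$, where
\begin{equation*}
w:=\Sigma_d^{-1/2}P_{\le d}\hat\Sigma P_{>d}f=\frac1n\sum_{i=1}^n\xi_i\tilde Z_i,\qquad \xi_i:=\langle P_{>d}f,X_i\rangle ,
\end{equation*}
and $\xi_i$ is a mean-zero sub-Gaussian scalar with variance $\sigma_\xi^2=\sum_{k>d}\lambda_k\langle f,u_k\rangle^2$. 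Since $P_{\le d}\Sigma P_{>d}=0$, $\mathbb E[\xi_i\tilde Z_i]=0$, so $w$ is an average of i.i.d.\ centered vectors, and for a unit $v\in U_d$ the variable $\xi_i\langle v,\tilde Z_i\rangle$ is sub-exponential with $\psi_1$-norm $\lesssim\|\xi_i\|_{\psi_2}\|\langle v,\tilde Z_i\rangle\|_{\psi_2}\lesssim\SGN^2\sigma_\xi$. Bernstein's inequality at scale $\asymp\SGN^2\sigma_\xi$ and a union bound over a $1/2$-net of the unit sphere of $U_d$ (of cardinality $\le5^d$) produce an event $\mathcal B$ of probability $\ge1-5^d\cdot2e^{-c'n}\ge1-2e^{-c''n}$ (for $d\le cn$) on which $\|w\|\le C\SGN^2\sigma_\xi$. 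Combining on $\mathcal A\cap\mathcal B$ bounds the approximation term by $(1+4C^2\SGN^4)\sum_{k>d}\lambda_k\langle f,u_k\rangle^2$, which with the variance bound gives the assertion (the numerical factor $2$ in front of $\sigma^2d/n$ being absorbed into the final constant).

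The routine parts are the bias-variance decomposition above and the variance term (pure random-matrix control of $\hat\Sigma_d$, which also supplies $\mathcal A$). The main obstacle is the cross term $\hat\Sigma_d^{-1}P_{\le d}\hat\Sigma P_{>d}f$: it is the bias inflation created by the empirical covariance coupling the in-model directions $U_d$ with the tail $P_{>d}f$, and for general (non-Gaussian) sub-Gaussian $X$ the blocks $P_{\le d}X_i$ and $\langle P_{>d}f,X_i\rangle$ are dependent, so one cannot simply condition on the latter. Working instead with the centered products $\xi_i\tilde Z_i$ and a sub-exponential concentration in $d$ dimensions is what forces the restriction $d\le cn$ (to absorb the $5^d$ from the covering), in the same spirit as the effective-rank conditions used elsewhere in the paper. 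Alternatively, the whole estimate can be obtained by quoting the known risk bounds for misspecified ordinary least squares in random design (\cite[Chapter 11]{GKKW02}) applied to the regression of $Y_i$ on $P_{\le d}X_i$, for which the misspecification error is exactly $\sum_{k>d}\lambda_k\langle f,u_k\rangle^2$.
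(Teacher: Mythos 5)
Your proposal is correct, but it handles the central difficulty differently from the paper. You start from the normal equations and the explicit decomposition $\EstORA-f=-P_{>d}f+\hat\Sigma_d^{-1}P_{\le d}\hat\Sigma P_{>d}f+\tfrac1n\hat\Sigma_d^{-1}P_{\le d}S_n^*\boldsymbol{\epsilon}$, and then control the bias-inflation term $\hat\Sigma_d^{-1}P_{\le d}\hat\Sigma P_{>d}f$ by whitening, writing it as $\tfrac1n\sum_i\xi_i\tilde Z_i$ and applying sub-exponential Bernstein plus a $5^d$-net over the unit sphere of $U_d$, which is where your restriction $d\le cn$ enters a second time. The paper never forms this term explicitly: it uses the $\Sigma$-orthogonal Pythagoras split $\|\EstORA-f\|_{L^2(\mathbb{P}^X)}^2=\|\EstORA-P_{\le d}f\|_{L^2(\mathbb{P}^X)}^2+\|P_{>d}f\|_{L^2(\mathbb{P}^X)}^2$, passes to the empirical norm on $U_d$ via the change-of-norm event (Lemmas \ref{LemChangeNorm1.main} and \ref{LemConcIneq} applied with $\mu=0$ on the $d$-dimensional space, i.e.\ the same restricted-isometry event as your $\mathcal A$), and then exploits that $S_n\EstORA$ is the empirical-norm projection of $\mathbf{Y}$ onto $S_nU_d$: by Pythagoras in the empirical inner product the misspecification contribution is bounded by $\|P_{>d}f\|_n^2$, which is then compared to $\|P_{>d}f\|_{L^2(\mathbb{P}^X)}^2$ by a single scalar Bernstein inequality. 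So the paper's route avoids any covering argument and any explicit inverse of $\hat\Sigma_d$, and (being built from near-equalities) immediately yields the matching lower bound mentioned in Remark 1; your route costs a $d$-dimensional uniform concentration step but gives an explicit formula for, and a direct quantitative bound on, the cross term, making transparent exactly how the empirical coupling between $U_d$ and the tail enters. Two small points to tidy up: your net/Bernstein step should state that the constant $c$ in $d\le cn$ is chosen small relative to the Bernstein exponent so that $5^d e^{-c'n}\le e^{-c''n}$ (you say this, but it is the one place where the constants interact), and note that both your argument and the paper's actually produce a constant factor in front of $\sigma^2 d/n$ as well, so the statement's constant $C$ should be read as applying to the whole right-hand side — this is a feature of the paper's own proof, not a defect of yours.
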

\begin{remark}
Proposition \ref{PropOracleRisk} only states an upper bound, but the proof also provides a corresponding lower bound.
\end{remark}

The oracle bias term $\sum_{k>d}\lambda_k\langle f,u_k\rangle^2$ is affected by the fact that $U_d$ does not necessarily have good approximation properties for $f$. In the extreme case, it is equal to $\lambda_{d+1}\|f\|^2$. 
 
\begin{corollary}\label{CorOracleRisk} Grant the assumptions of Proposition \ref{PropOracleRisk}. 
Then we have 
\[
\forall d\le cn,\quad\mathbb{E}(\|\EstORA-f\|_{L^2(\mathbb{P}^X)}^2\,|\,X_1,\dots,X_n)\le C\Big(\lambda_{d+1}\|f\|^2+\frac{\sigma^2d}{n}\Big)
\]
with probability at least $1-Ce^{-cn}$, where $c,C$ are the constants in Proposition~\ref{PropOracleRisk}. 
\end{corollary}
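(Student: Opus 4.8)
The plan is to obtain Corollary \ref{CorOracleRisk} as an immediate consequence of Proposition \ref{PropOracleRisk}: it suffices to bound the oracle bias term $\sum_{k>d}\lambda_k\langle f,u_k\rangle^2$ from above by $\lambda_{d+1}\|f\|^2$, deterministically, so that the high-probability event, the constants $c,C$, and the range $d\le cn$ from the proposition all carry over unchanged.

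To do this, I would use that the eigenvalues are arranged in nonincreasing order, so that $\lambda_k\le\lambda_{d+1}$ for every $k>d$. Pulling this factor out of the sum and then extending the summation to all indices gives
\[
\sum_{k>d}\lambda_k\langle f,u_k\rangle^2\le\lambda_{d+1}\sum_{k>d}\langle f,u_k\rangle^2\le\lambda_{d+1}\sum_{j\ge1}\langle f,u_j\rangle^2=\lambda_{d+1}\|f\|^2,
\]
where the final equality is Parseval's identity, valid because $(u_j)_{j\ge1}$ was chosen to be an orthonormal basis of $\mathcal H$ (recall $\sum_{j\ge1}P_j=I$). Substituting this bound into the conclusion of Proposition \ref{PropOracleRisk} yields the claim. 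There is no genuine obstacle here: the only point worth stating explicitly is that we are merely weakening the right-hand side of the proposition by a deterministic inequality, so the probability bound $1-Ce^{-cn}$ is inherited verbatim, and one should remark (as the text already does below) that in the worst case this estimate is essentially sharp, since $f$ may be proportional to $u_{d+1}$.
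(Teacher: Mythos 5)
Your proposal is correct and matches the paper's (implicit) argument: the corollary is obtained from Proposition \ref{PropOracleRisk} simply by the deterministic bound $\sum_{k>d}\lambda_k\langle f,u_k\rangle^2\le\lambda_{d+1}\sum_{k>d}\langle f,u_k\rangle^2\le\lambda_{d+1}\|f\|^2$, so the event, constants and probability carry over unchanged. The paper gives no separate proof precisely because the deduction is this immediate, and your remark on near-sharpness (take $f$ proportional to $u_{d+1}$) agrees with the discussion preceding the corollary.
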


Corollary \ref{CorOracleRisk} can be improved under additional source conditions (see, e.g., \cite{MR3114700,BM18}). Yet, in order to keep this note short, the worst-case scenario in Corollary \ref{CorOracleRisk} will serve as a benchmark later on.

\subsection{Main result in the classical regime}

Our first main result shows that if $d\le cn$ and $\tr_{>d}(\Sigma)/\lambda_{d+1}\le cn$, then PCR performs comparably to the oracle estimator from the previous section. 

\begin{thm}\label{MainResultClassical}
Grant Assumption \ref{SubGauss}. Then there are constants $c,C>0$ depending only on $\SGN$ such that the following holds. If $d\ge  1$ satisfies
\begin{align}\label{MainResultClassicalCond}
\max\Big(d,\frac{\tr_{>d}(\Sigma)}{\lambda_{d+1}}\Big)\le cn,
\end{align}
then we have
\begin{align*}
\mathbb{E}\big(\|\EstPCR-f\|_{L^2(\mathbb{P}^X)}^2\,\big|\,X_1,\dots,X_n\big)&\le C\Big(\lambda_{d+1}\|f\|^2+\frac{\sigma^2d}{n}\Big)
\end{align*}
with probability at least $1-Ce^{-cn}$.
\end{thm}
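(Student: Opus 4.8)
The plan is to start from the bias–variance decomposition in Lemma \ref{LemBVDecCond}, so that it suffices to bound $B_d^2(f)=\|\hat P_{>d}f\|_{L^2(\PP^X)}^2$ and $V_d=(\sigma^2/n)\sum_{j\le d}\hat\lambda_j^{-1}\|\hat u_j\|_{L^2(\PP^X)}^2$ separately, on an event of probability at least $1-Ce^{-cn}$. The guiding idea is that under condition \eqref{MainResultClassicalCond} the empirical eigenstructure is close to the population one at scale $\lambda_{d+1}$: in particular the empirical eigenvalues near the cutoff do not exhibit a significant upward bias, so $\hat\lambda_j\asymp\lambda_j$ for $j\le d$ and, crucially, $\hat\lambda_{d+1}\lesssim\lambda_{d+1}$; and the empirical top-$d$ eigenspace is a small perturbation of $U_d$ in the relevant norm. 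The two quantitative inputs I would invoke are (i) a relative-perturbation bound for empirical eigenvalues of the form $|\hat\lambda_j/\lambda_j-1|\le 1/2$ for $j\le d$ and $\hat\lambda_{d+1}\le C\lambda_{d+1}$, valid on an exponentially good event when $d$ and $\operatorname{tr}_{>d}(\Sigma)/\lambda_{d+1}$ are both $\le cn$ (this is exactly where the effective-rank condition enters, via concentration of $\|\Sigma_{>d}^{1/2}(\hat\Sigma-\Sigma)\Sigma_{>d}^{1/2}\|$-type terms and of $n^{-1}\sum_i\|P_{>d}X_i\|^2$), and (ii) the excess-risk representation \eqref{EqERDec} together with a high-probability bound $\mathcal E_d^{PCA}\le C\operatorname{tr}_{>d}(\Sigma)$, which controls $\sum_{j\le d}\|P_j\hat P_{>d}\|_2^2$ and $\sum_{k>d}\lambda_k\|P_k\hat P_{\le d}\|_2^2$ after choosing $\mu=\lambda_{d+1}$ and using $\lambda_d-\lambda_{d+1}$ is not needed because the construction is gap-free.

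For the variance term, on the good event $\hat\lambda_j^{-1}\le 2\lambda_j^{-1}$ for $j\le d$, and $\|\hat u_j\|_{L^2(\PP^X)}^2=\langle\hat u_j,\Sigma\hat u_j\rangle$. I would split $\Sigma=P_{\le d}\Sigma P_{\le d}+\Sigma_{>d}+\text{cross terms}$ and bound $\langle\hat u_j,\Sigma\hat u_j\rangle\le \hat\lambda_j\cdot\text{(something)}+\langle\hat u_j,\Sigma_{>d}\hat u_j\rangle$; more directly, $\sum_{j\le d}\hat\lambda_j^{-1}\langle\hat u_j,\Sigma\hat u_j\rangle$ can be compared to $\sum_{j\le d}\hat\lambda_j^{-1}\langle\hat u_j,\hat\Sigma\hat u_j\rangle=d$ plus an error $\sum_{j\le d}\hat\lambda_j^{-1}\langle\hat u_j,(\Sigma-\hat\Sigma)\hat u_j\rangle$, and the latter is controlled by $\max_{j\le d}\hat\lambda_j^{-1}$ times the appropriate operator norm of $\Sigma-\hat\Sigma$ restricted to $\hat U_d$, which is $o(1)$ times $d$ under $d\le cn$ by standard sub-Gaussian covariance concentration (Assumption \ref{SubGauss}). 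This yields $V_d\le C\sigma^2 d/n$.

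For the bias term, $B_d^2(f)=\langle f,\hat P_{>d}\Sigma\hat P_{>d}f\rangle\le \|\hat P_{>d}\Sigma^{1/2}\|_\infty^2\|f\|^2$, so it suffices to show $\|\Sigma^{1/2}\hat P_{>d}\|_\infty^2\le C\lambda_{d+1}$. Writing $\Sigma^{1/2}\hat P_{>d}=\Sigma^{1/2}P_{>d}\hat P_{>d}+\Sigma^{1/2}P_{\le d}\hat P_{>d}$, the first piece has norm $\le \|\Sigma_{>d}^{1/2}\|_\infty=\lambda_{d+1}^{1/2}$; for the second I would use that $\|\Sigma^{1/2}P_{\le d}\hat P_{>d}\|_\infty\le \lambda_1^{1/2}\|P_{\le d}\hat P_{>d}\|_\infty$ is too crude, so instead I bound $\|\Sigma_{\le d}^{1/2}P_j\hat P_{>d}\|$ termwise and invoke that on the good event $\hat\Sigma\hat P_{>d}\preceq\hat\lambda_{d+1}\hat P_{>d}\preceq C\lambda_{d+1}\hat P_{>d}$, hence $\|\hat\Sigma^{1/2}\hat P_{>d}\|_\infty^2\le C\lambda_{d+1}$, and then transfer from $\hat\Sigma$ to $\Sigma$ on $\hat U_d^\perp$ using the same covariance-concentration bound as above (the discrepancy $\langle g,(\Sigma-\hat\Sigma)g\rangle$ over the relevant subspace is $\le c\lambda_{d+1}\|g\|^2$ under \eqref{MainResultClassicalCond}). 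Combining, $B_d^2(f)\le C\lambda_{d+1}\|f\|^2$. The main obstacle is step (i)–(ii): establishing the upper bound $\hat\lambda_{d+1}\le C\lambda_{d+1}$ and the excess-risk bound $\mathcal E_d^{PCA}\le C\operatorname{tr}_{>d}(\Sigma)$ with exponential probability precisely under $\operatorname{tr}_{>d}(\Sigma)/\lambda_{d+1}\le cn$ — this is the quantitative heart of the argument and is where the high-dimensional PCA perturbation theory (in the spirit of \cite{RW17,MR4263288}) does the real work; everything else is bookkeeping with projections and sub-Gaussian concentration.
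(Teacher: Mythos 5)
Your overall architecture is the right one (start from Lemma \ref{LemBVDecCond}, use $\hat\lambda_j\ge\lambda_j/2$ for $j\le d$ and $\hat\lambda_{d+1}\le C\lambda_{d+1}$, then compare $\Sigma$- and $\hat\Sigma$-quadratic forms on the $\hat u_j$ and on $\hat U_d^\perp$), but the step that carries all the weight --- transferring between empirical and population quadratic forms --- is justified by \emph{unweighted} covariance concentration, and that mechanism fails quantitatively in exactly the regimes the theorem targets. In the variance part you bound $\sum_{j\le d}\hat\lambda_j^{-1}\langle\hat u_j,(\Sigma-\hat\Sigma)\hat u_j\rangle$ by $\max_{j\le d}\hat\lambda_j^{-1}$ times an operator norm of $\hat\Sigma-\Sigma$ restricted to $\hat U_d$ and claim this is $o(d)$ under $d\le cn$ by ``standard sub-Gaussian covariance concentration''. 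But that restricted norm is at least $|\langle\hat u_1,(\hat\Sigma-\Sigma)\hat u_1\rangle|$, which is generically of order $\lambda_1/\sqrt{n}$ (already $\langle u_1,(\hat\Sigma-\Sigma)u_1\rangle$ fluctuates at this scale), while $\max_{j\le d}\hat\lambda_j^{-1}\asymp\lambda_d^{-1}$; for polynomial decay $\lambda_j\asymp j^{-\alpha}$, $\alpha>1$, at the relevant $d\asymp n^{1/(\alpha+1)}$ one has $\lambda_d\ll\lambda_1/\sqrt{n}$, so the claimed bound is false. Similarly, in the bias part the assertion $\langle g,(\Sigma-\hat\Sigma)g\rangle\le c\lambda_{d+1}\|g\|^2$ for all $g\in\hat U_d^\perp$ does not follow from covariance concentration: $\hat U_d^\perp$ is a random (generally infinite-dimensional) subspace containing vectors with non-negligible components along $u_1,\dots,u_d$, and controlling their $\Sigma$-energy at scale $\lambda_{d+1}$ is essentially the statement $B_d^2(f)\lesssim\lambda_{d+1}\|f\|^2$ you are trying to prove; as written this step is circular.

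What is missing is the \emph{regularized} (relative) comparison of norms, which is where the effective-rank condition actually does its work. With $\mu=2\operatorname{tr}_{>d}(\Sigma)/(c_1n)$ one has $\mathcal N(\mu)\le d+\operatorname{tr}_{>d}(\Sigma)/\mu\le c_1 n$ under \eqref{MainResultClassicalCond}, hence $\|(\Sigma+\mu)^{-1/2}(\hat\Sigma-\Sigma)(\Sigma+\mu)^{-1/2}\|_\infty\le 1/2$ with probability $1-e^{-c_2 n}$ (Lemmas \ref{LemChangeNorm1.main} and \ref{LemConcIneq}). This single event yields simultaneously $\|h\|_{L^2(\mathbb{P}^X)}^2\le 2\|h\|_n^2+\mu\|h\|^2$ for all $h$, $\hat\lambda_j\le\tfrac{3}{2}\lambda_j+\tfrac{\mu}{2}$, and $\|\hat u_j\|_{L^2(\mathbb{P}^X)}^2\le 2\hat\lambda_j+\mu\le 3\lambda_j+2\mu$ (Corollaries \ref{CorChangeNormFunction} and \ref{CorChangeNormEV}), with $\mu\le (2c/c_1)\lambda_{d+1}$ by \eqref{MainResultClassicalCond}. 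Combined with $\hat\lambda_j\ge\lambda_j/2$ for $j\le d$ (Corollary \ref{CorEVConcLB}) this gives $V_d\le C\sigma^2 d/n$ term by term (the normalization is by $\lambda_j$, not by $\lambda_d$), and applied to $h=\hat P_{>d}f$ together with $\|\hat P_{>d}f\|_n^2\le\hat\lambda_{d+1}\|f\|^2$ it gives $B_d^2(f)\le C\lambda_{d+1}\|f\|^2$; your excess-risk input (ii) is not needed for this theorem. So the skeleton is correct, but you must replace the unweighted concentration by the $\mu$-regularized change-of-norm bound (equivalently, per-$j$ relative bounds at scale $\lambda_j+\operatorname{tr}_{>d}(\Sigma)/n$); without it neither transfer step goes through.
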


Let us briefly illustrate Theorem \ref{MainResultClassical} for two standard eigenvalue decays from kernel-based learning and functional data analysis. 

\begin{example}[Exponential decay]
Suppose that there are constants $\alpha>0$ and $\CEV\ge 1$ such that
\begin{equation*}
\forall j\ge 1,\qquad \CEV^{-1}e^{-\alpha j}\le \lambda_j\le \CEV e^{-\alpha j}.
\end{equation*} 
Moreover, suppose that Assumption \ref{SubGauss} holds. Then, by Theorem \ref{MainResultClassical}, there exist constants $c,C>0$ depending only on $\SGN$, $\alpha$, $\CEV$, $\|f\|^2$ and $\sigma^2$ such that
\[
\forall d\le cn,\quad\mathbb{E}\big(\|\EstPCR-f\|_{L^2(\mathbb{P}^X)}^2\,\big|\,X_1,\dots,X_n\big)\le C\Big(e^{-\alpha d}+\frac{d}{n}\Big)
\]
with probability at least $1-Ce^{-cn}$. Choosing $d$ of size $\log (n)/\alpha$, we get the minimax optimal rate $\log (n)/n$ (see, e.g., \cite{CJ10}).
\end{example}

\begin{example}[Polynomial decay]
Suppose that there are constants $\alpha>1$ and $\CEV\ge 1$ such that
\begin{equation*}
\forall j\ge 1,\qquad \CEV^{-1}j^{-\alpha}\le \lambda_j\le \CEV j^{-\alpha}.
\end{equation*} 
Moreover, suppose that Assumption \ref{SubGauss} holds. Then, by Theorem \ref{MainResultClassical}, there exist constants $c,C>0$ depending only on $\SGN$, $\alpha$, $\CEV$, $\|f\|^2$ and $\sigma^2$ such that
\begin{align*}
\forall d\le cn,\quad\mathbb{E}\big(\|\EstPCR-f\|_{L^2(\mathbb{P}^X)}^2\,\big|\,X_1,\dots,X_n\big)\le C\Big(d^{-\alpha}+\frac{d}{n}\Big)
\end{align*}
with probability at least $1-Ce^{-cn}$. Choosing $d$ of size $n^{1/(\alpha+1)}$, we get the minimax optimal rate $n^{-\alpha/(\alpha+1)}$ (see, e.g., \cite{HH07,CJ10} or \cite{BM18}). 
\end{example}

\subsection{Main result in the high-dimensional regime}

In this section, we show that if $d\le n$ satisfies $\tr_{>d}(\Sigma)/\lambda_{d+1}\ge  Bn$ for an appropriate constant $B>1$ depending only on $L$, then high-dimensional phenomena of PCA result in a regularization effect of the PCR estimator. For our second main result, we introduce the Karhunen-Loève coefficients given by $\eta_j=\lambda_j^{-1/2}\langle X,u_j\rangle$, $j\ge 1$.

\begin{thm} \label{BenignOverfittingResult}
Grant Assumption \ref{SubGauss} and suppose that the Karhunen-Loève coefficients $\eta_1,\eta_2,\dots$ are independent. Then there are constants $B>1$ and $c,C>0$  depending only on $\SGN$ such that the following holds. Let 
\begin{align*}
    j^* = \min \Big\{ j \ge 0 \ \Big|\ \frac{\tr_{>j}(\Sigma)}{\lambda_{j+1}} \ge B n \Big\}.
\end{align*}
If $j^* \le cn$, then, for all $j^* \le d \le n$ and $1 \le t \le n$, we have
\begin{align*}
    &\mathbb{E}\big(\|\hat{f}_d^{PCR}-f\|_{L^2(\mathbb{P}^X)}^2\,\big|\,X_1,\dots,X_n\big) \\
    &\le C \bigg(\frac{\tr_{>j^*}(\Sigma)}{n}\|f\|^2 + t^2\frac{\sigma^2 j^*}{n} + \frac{\sigma^2}{\tr_{>j^*}(\Sigma)} \sum_{j=j^*+1}^d \tr(\hat{P}_j \Sigma_{>j^*})\bigg)
\end{align*}
with probability at least $1-Ce^{-ct}$. Moreover, for $d=n$, we have
\begin{align*}
    \sum_{j=j^*+1}^n \tr(\hat{P}_j \Sigma_{>j^*}) \le Cn \frac{\tr_{>j^*}(\Sigma^2)}{\tr_{>j^*}(\Sigma)} 
\end{align*}
with probability at least $1-Ce^{-cn}$.
\end{thm}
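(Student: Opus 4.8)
The starting point is the bias--variance decomposition of Lemma~\ref{LemBVDecCond}: on $\{\hat\lambda_d>0\}$ it suffices to bound $B_d^2(f)=\langle\hat P_{>d}f,\Sigma\hat P_{>d}f\rangle$ and $V_d=\frac{\sigma^2}{n}\sum_{j\le d}\hat\lambda_j^{-1}\langle\hat u_j,\Sigma\hat u_j\rangle$. The plan is to feed into this three high-dimensional PCA inputs, all available from \cite{RW17,MR4517351,MR4263288} under the effective rank condition $\tr_{>j^*}(\Sigma)/\lambda_{j^*+1}\ge Bn$ with $B$ large depending only on $\SGN$, the assumption $j^*\le cn$, and the Karhunen--Lo\`eve independence: (i) for the perturbative indices $j\le j^*$ (where $\tr_{>j}(\Sigma)/\lambda_{j+1}<Bn$), with probability $1-Ce^{-ct}$ one has $\hat\lambda_j\ge c\lambda_j$ and $\langle\hat u_j,\Sigma\hat u_j\rangle\le Ct^2\lambda_j$; (ii) the tail Gram matrix $G_{>j^*}=(\langle P_{>j^*}X_i,P_{>j^*}X_l\rangle)_{i,l\le n}$ satisfies $\|G_{>j^*}-\tr_{>j^*}(\Sigma)I_n\|_\infty\le\tfrac12\tr_{>j^*}(\Sigma)$ with probability $1-Ce^{-cn}$; (iii) a delocalization estimate, namely $\|\hat P_{>j^*}\Sigma\hat P_{>j^*}\|_\infty\le C\tr_{>j^*}(\Sigma)/n$ and $\sum_{k\le j^*}\lambda_k\|(\Pi-\hat P_{\le j^*})u_k\|^2\le Ct^2 j^*\tr_{>j^*}(\Sigma)/n$, where $\Pi$ is the orthogonal projection onto $\spann(X_1,\dots,X_n)$.

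Given these inputs I would argue as follows. Since $G=S_nS_n^*=n\hat\Sigma$ has the $n\hat\lambda_j$ as nonzero eigenvalues and $G=G_{>j^*}+G_{\le j^*}\succeq G_{>j^*}$ (a sum of two Gram matrices), Weyl's inequality and (ii) give $\hat\lambda_j\ge\mu_n(G_{>j^*})/n\ge\tfrac12\tr_{>j^*}(\Sigma)/n$ for all $1\le j\le n$; in particular $\hat\lambda_d>0$. For the bias, $d\ge j^*$ yields $\hat P_{>d}=\hat P_{>d}\hat P_{>j^*}$, so $B_d^2(f)\le\|\hat P_{>j^*}\Sigma\hat P_{>j^*}\|_\infty\|f\|^2\le C\tr_{>j^*}(\Sigma)\|f\|^2/n$ by (iii). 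For the variance I split $V_d$ at $j^*$: the part over $j\le j^*$ is at most $Ct^2\sigma^2 j^*/n$ by (i), and in the part over $j^*<j\le d$ I use $\hat\lambda_j^{-1}\le 2n/\tr_{>j^*}(\Sigma)$ together with $\langle\hat u_j,\Sigma\hat u_j\rangle=\tr(\hat P_j\Sigma_{>j^*})+\tr(\hat P_j\Sigma_{\le j^*})$; the $\Sigma_{>j^*}$-terms reproduce the last term in the theorem, while $\frac{\sigma^2}{n}\sum_{j^*<j\le d}\hat\lambda_j^{-1}\tr(\hat P_j\Sigma_{\le j^*})\le\frac{2\sigma^2}{\tr_{>j^*}(\Sigma)}\tr\big((\Pi-\hat P_{\le j^*})\Sigma_{\le j^*}\big)=\frac{2\sigma^2}{\tr_{>j^*}(\Sigma)}\sum_{k\le j^*}\lambda_k\|(\Pi-\hat P_{\le j^*})u_k\|^2\le Ct^2\sigma^2 j^*/n$ by (iii). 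Collecting the terms gives the first assertion with probability $1-Ce^{-ct}$.

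The ``moreover'' part has a short self-contained proof. On the event of (ii) we have $\hat\lambda_n>0$, hence $\Pi=\hat P_{\le n}=S_n^*G^{-1}S_n$, and therefore
\begin{align*}
\sum_{j=j^*+1}^n\tr(\hat P_j\Sigma_{>j^*})
&=\tr\big((\Pi-\hat P_{\le j^*})\Sigma_{>j^*}\big)\le\tr(\Pi\Sigma_{>j^*})=\tr\big(G^{-1}S_n\Sigma_{>j^*}S_n^*\big)\\
&\le\frac{\tr(S_n\Sigma_{>j^*}S_n^*)}{\mu_n(G)},
\end{align*}
where the last step uses $S_n\Sigma_{>j^*}S_n^*\succeq0$. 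Now $\tr(S_n\Sigma_{>j^*}S_n^*)=\sum_{i=1}^n\|\Sigma_{>j^*}^{1/2}X_i\|^2$ is a sum of $n$ i.i.d.\ sub-exponential variables with mean $\tr_{>j^*}(\Sigma^2)$ and $\psi_1$-norm of order $\tr_{>j^*}(\Sigma^2)$ (by sub-Gaussianity), so Bernstein's inequality bounds it by $Cn\tr_{>j^*}(\Sigma^2)$ with probability $1-Ce^{-cn}$, while $\mu_n(G)\ge\mu_n(G_{>j^*})\ge\tfrac12\tr_{>j^*}(\Sigma)$ by (ii). This gives the claimed bound.

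The main obstacle is establishing the delocalization estimates (iii) with the stated probabilities. Because the spectral gap of $\hat\Sigma$ at $j^*$ can be of lower order than $\hat\lambda_{j^*+1}$ itself, no Davis--Kahan comparison of $\hat P_{>j^*}$ with $P_{>j^*}$ is available, and a crude trace or sine-theta bound loses a factor $j^*$; one must argue directly that the empirical tail subspace carries at most $O(\tr_{>j^*}(\Sigma)/n)$ of the ``head'' variance of $\Sigma$---equivalently, that high-index empirical eigenvectors overlap essentially only the low-variance population directions, while low-index ones nearly recover $\spann(u_1,\dots,u_{j^*})$ inside $\spann(X_1,\dots,X_n)$. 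Together with the eigenvalue upward-bias bound (i) and the Gram matrix concentration (ii)---in particular the lower bound $\mu_n(G_{>j^*})\ge c\,\tr_{>j^*}(\Sigma)$, which is where $B$ must be chosen large depending only on $\SGN$---this is where the technical core lies.
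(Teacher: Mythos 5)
Your reduction of the theorem is essentially the one the paper uses: you start from Lemma \ref{LemBVDecCond}, split the variance at $j^*$, lower bound $\hat\lambda_n$ (hence all $\hat\lambda_j$, $j\le n$) by $c\,\operatorname{tr}_{>j^*}(\Sigma)/n$ via concentration of the tail Gram matrix (your input (ii) is Lemma \ref{LemBartlett} run at level $j^*$, and your remark about choosing $B$ large is exactly where \eqref{EqRemainderTrace} enters), and you identify the leakage of head variance into the empirical tail with $\operatorname{tr}\big((\Pi-\hat P_{\le j^*})\Sigma_{\le j^*}\big)$, which is bounded by the PCA excess risk ${\mathcal E}^{PCA}_{\le j^*}(0)$ since $\Pi-\hat P_{\le j^*}\preceq \hat P_{>j^*}$; this is the same structure as \eqref{eq:variance:benign:decomp}. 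Your proof of the ``moreover'' claim (bounding $\operatorname{tr}(\Pi\Sigma_{>j^*})\le \operatorname{tr}(S_n\Sigma_{>j^*}S_n^*)/\mu_n(G)$ and controlling $n^{-1}\sum_i\|\Sigma_{>j^*}^{1/2}X_i\|^2$ by Bernstein) is complete and coincides with the paper's argument via Lemma \ref{LemConcWeightedNorm}.

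The gap is that your inputs (i) and (iii) are asserted rather than proved, and, as you concede in your last paragraph, they are the technical core; they are not off-the-shelf citations. The decisive one is the second half of (iii): it is precisely the bound ${\mathcal E}^{PCA}_{\le j^*}(0)\le Ct^2 j^*\operatorname{tr}_{>j^*}(\Sigma)/n$ with probability $1-Ce^{-ct}$, which in the paper requires a high-probability version of the relative perturbation bound of \cite{RW17} (Lemma \ref{LemExcessRiskHighProbability}) applied with the specific regularization $\mu\asymp\operatorname{tr}_{>j^*}(\Sigma)/n$ and a verification of condition \eqref{ThmERBoundCond} using the definition of $j^*$ (Corollary \ref{CorExcessRisk0}); the expectation bound in \cite{RW17} does not directly deliver this, so leaving it as a literature pointer leaves the first assertion of the theorem unproven. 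Conversely, you overestimate the remaining difficulties: no eigenvector comparison (Davis--Kahan or otherwise) is needed for (i) and the first half of (iii). On the change-of-norm event of Lemmas \ref{LemChangeNorm1.main} and \ref{LemConcIneq} with $\mu=2\operatorname{tr}_{>j^*}(\Sigma)/(c_1n)$ (admissible since $j^*\le c_1n/2$) one has simultaneously $\|h\|_{L^2(\mathbb{P}^X)}^2\le 2\|h\|_n^2+2\mu\|h\|^2$ for all $h$ and $\hat\lambda_j\le 3\lambda_j/2+\mu/2$; taking $h=\hat u_j$ with $j\le j^*$ and using $\hat\lambda_j\ge\lambda_j/2$ (Corollary \ref{CorEVConcLB}) together with $\operatorname{tr}_{>j^*}(\Sigma)\le Bn\lambda_{j^*}$ gives $\hat\lambda_j^{-1}\langle\hat u_j,\Sigma\hat u_j\rangle\le C$ (your $t^2$ there is unnecessary), while taking $h=\hat P_{>j^*}f$ gives $\|\hat P_{>j^*}f\|_{L^2(\mathbb{P}^X)}^2\le 2\hat\lambda_{j^*+1}\|f\|^2+2\mu\|f\|^2\le C\operatorname{tr}_{>j^*}(\Sigma)\|f\|^2/n$, which is exactly your operator-norm delocalization statement. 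So the only genuinely missing piece is the excess-risk estimate, and without a proof of it (or of an equivalent substitute) the main high-probability bound does not follow from your outline.
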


Let us illustrate Theorem \ref{BenignOverfittingResult} in two examples. Note that polynomial or exponential decay are both covered by the classical regime in Theorem \ref{MainResultClassical}.

\begin{example}[The spiked covariance model]
Grant Assumption \ref{SubGauss}. Suppose that $\mathcal{H}=\mathbb{R}^p$, that the Karhunen-Lo\`eve coefficients $\eta_1,\eta_2,\dots, \eta_p$ are independent and that
\begin{align*}
\lambda_1\ge  \dots\ge  \lambda_r>\lambda_{r+1}=\dots=\lambda_p.
\end{align*}
We assume that $r\le cn$ and $p\ge  Cn$. Applying the first part of Theorem~\ref{BenignOverfittingResult}, there are constants $c,C>0$ depending only on $\SGN$ and $B$ such that the following holds. If $j^*= r$,
then, for all $r\le d\le n$ and all $1 \le t \le n$, we have 
\begin{align*}
\mathbb{E}\big(\|\EstPCR-f\|_{L^2(\mathbb{P}^X)}^2\,\big|\,X_1,\dots,X_n\big) \le C\Big(\frac{\tr_{>r}(\Sigma)}{n}\|f\|^2+t^2\frac{\sigma^2r}{n}+\frac{\sigma^2(d-r)}{p}\Big)
\end{align*}
with probability at least $1-Ce^{-ct}$. To see this, note that
\begin{align*}
\sum_{j=r+1}^d \tr(\hat{P}_j \Sigma_{>r})=\lambda_{r+1}\sum_{j=r+1}^d \tr(\hat{P}_j P_{>r}) \le \lambda_{r+1}(d-r).
\end{align*}
Hence, in this special case, the variance of the PCR estimator is piecewise linear in $d$ and has a change point at $d=r+1$, where the slope decreases from $\sigma^2/n$ to $\sigma^2/p$. This phenomenon is explained by the eigenvalue upward bias presented in Corollary \ref{CorBartlett}.
\end{example}

\begin{example}[Benign overfitting phenomenon]
\label{ExpBartlett}
Grant the assumptions of Theorem \ref{BenignOverfittingResult}. Suppose that the Karhunen-Lo\`eve coefficients $\eta_1,\eta_2,\dots$ are independent and that $d=n$. Then Corollary~\ref{CorBartlett} implies that $\hat\lambda_n>0$ with high probability, meaning that $\hat{f}_n^{PCR}$ coincides with the minimum-norm least squares estimator 
\begin{align*}
\hat{f}_n^{PCR}=\argmin_{f\in\mathcal{H}:S_nf=\mathbf{Y}}\limits\|f\|,
\end{align*}
which interpolates the data:
\begin{align*}
    \hat{f}_n^{PCR}=S_n^*(S_nS_n^*)^{-1}\mathbf{Y}\quad\text{such that}\quad S_n\hat{f}_n^{PCR}=\mathbf{Y}.
\end{align*}
By Theorem \ref{BenignOverfittingResult}, we get, for all $1 \le t \le n$, 
\begin{align*}
    &\mathbb{E}\big(\|\hat{f}_n^{PCR}-f\|_{L^2(\mathbb{P}^X)}^2\,\big|\,X_1,\dots,X_n\big) \\ &\le C \bigg(\frac{\tr_{>j^*}(\Sigma)}{n}\|f\|^2 + t^2\frac{\sigma^2 j^*}{n} + \sigma^2 n \frac{\tr_{>j^*}(\Sigma^2)}{(\tr_{>j^*}(\Sigma))^2} \bigg)
\end{align*}
with probability at least $1-Ce^{-ct}$. Note that we get the same variance term as in Theorem 1 in \cite{MR4263288} and a similar bias term as in \cite{TB20}, provided that the worst-case scenario from Corollary \ref{CorOracleRisk} is considered. Note that we prove these results by different arguments based on perturbation bounds for the empirical covariance operator. This provides an alternative explanation of the self-induced regularization property of $\hat{f}_n^{PCR}$ from \cite{MR4295218}, namely through high-dimensional phenomena such as the eigenvalue upward bias and the eigenvector delocalization.
\end{example}


\section{Main tools} \label{SecTools}

In this section, we state perturbation bounds for empirical eigenvalues, empirical eigenvectors and the excess risk of PCA, developed in \cite{RW17,WahlCelisse,MR4263288}. These results form the basis of our analysis of the bias and the variance term in Lemma \ref{LemBVDecCond}. 

\subsection{Bounds for empirical eigenvalues}

\begin{lemma}\label{ThmEvDev} Grant Assumption \ref{SubGauss}. Then there are constants $c_1,c_2\in(0,1)$ depending only on $\SGN$ such that, for all $y>0$ and all $j\ge  1$ satisfying
\[
\frac{1}{y\wedge 1}\sum_{k> j}\frac{\lambda_k}{\lambda_j-\lambda_k+y\lambda_j}\le c_1n,
\]
we have
\begin{equation*}
\mathbb{P}\big(  \hat{\lambda}_{j}-\lambda_j> y\lambda_j\big)\le e^{ 1-c_2n (y\wedge y^2) }.
\end{equation*}
Moreover,  for all $y>0$ and $j\ge  1$ satisfying
\begin{equation*}
\frac{1}{y\wedge 1}\sum_{l< j}\frac{\lambda_l}{\lambda_l-\lambda_j+y\lambda_j}\le c_1n,
\end{equation*}
we have
\begin{equation*}
\mathbb{P}\big(  \hat{\lambda}_{j}-\lambda_j< - y\lambda_j\big)\le e^{ 1-c_2n (y\wedge y^2) }.
\end{equation*}
\end{lemma}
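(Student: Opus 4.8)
The plan is to reduce the eigenvalue concentration statement to a statement about quadratic forms in the Karhunen-Loève coefficients, and then apply a Hanson-Wright-type deviation inequality. I would start from the Courant-Fischer min-max characterization of eigenvalues: for the upper tail on $\hat\lambda_j$, observe that if $\hat\lambda_j > \lambda_j + y\lambda_j$, then there is a $(j)$-dimensional subspace $V$ (namely $\hat U_j$) on which the quadratic form $\langle v, \hat\Sigma v\rangle$ exceeds $(1+y)\lambda_j \|v\|^2$; intersecting this subspace with $U_{<j}^\perp = \overline{\operatorname{span}}(u_j, u_{j+1},\dots)$ (a codimension $j-1$ subspace) produces a nonzero vector $v$ in the intersection with $\langle v,\Sigma v\rangle \le \lambda_j\|v\|^2$ but $\langle v,\hat\Sigma v\rangle \ge (1+y)\lambda_j\|v\|^2$. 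Hence on the tail event there exists a unit vector $v \in \overline{\operatorname{span}}(u_k : k\ge j)$ with $\langle v, (\hat\Sigma - \Sigma)v\rangle \ge y\lambda_j$. The dual (lower-tail) statement is obtained symmetrically by working with $\overline{\operatorname{span}}(u_1,\dots,u_j)$ and the operator $\Sigma - \hat\Sigma$.

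Next I would linearize and rescale. Writing $X = \sum_k \lambda_k^{1/2}\eta_k u_k$ with $\E\eta_k=0$, $\E\eta_k^2=1$, and sub-Gaussian $\eta_k$ (by Assumption \ref{SubGauss}), the event above becomes: there is a unit vector $v = \sum_{k\ge j} a_k u_k$ such that
\[
\frac1n\sum_{i=1}^n\Big(\sum_{k\ge j}\lambda_k^{1/2} a_k \eta_{ik}\Big)^2 - \sum_{k\ge j}\lambda_k a_k^2 \ge y\lambda_j.
\]
Absorbing the weights, this is controlled by the operator norm of $\frac1n Z^TZ - \Lambda$ restricted to coordinates $k\ge j$, where $Z$ has i.i.d.\ sub-Gaussian rows with covariance $\Lambda = \operatorname{diag}(\lambda_k)_{k\ge j}$. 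A standard $\varepsilon$-net / matrix-deviation argument (or a direct application of a Hanson-Wright inequality together with a union bound over a net of the ellipsoid $\{\sum \lambda_k a_k^2\le 1\}$) gives, for a unit vector in the restricted ellipsoid,
\[
\PP\Big(\langle v,(\hat\Sigma-\Sigma)v\rangle \ge y\lambda_j\Big) \le \exp\big(1 - c n(y^2\wedge y) + C\,\text{(effective dimension term)}\big),
\]
and the effective dimension term is exactly where the hypothesis $\frac{1}{y\wedge1}\sum_{k>j}\frac{\lambda_k}{\lambda_j-\lambda_k+y\lambda_j}\le c_1 n$ enters: this quantity is (a smoothed version of) the effective rank $r_{y}(\Sigma_{\ge j})/(\text{gap})$ that bounds the metric entropy of the relevant ellipsoid at scale $y\lambda_j$, ensuring the net has at most $e^{c_2 n (y\wedge y^2)/2}$ points so that the union bound is absorbed into the exponent. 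For the lower tail one uses the analogous quantity $\sum_{l<j}\frac{\lambda_l}{\lambda_l-\lambda_j+y\lambda_j}$, which measures the number of "large" directions that could be pushed down below level $\lambda_j$.

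Since this is precisely Theorem 1 (or the relevant proposition) of Rei{\ss} and Wahl \cite{RW17} — indeed the lemma is attributed there — the cleanest route in the paper is simply to cite it, perhaps after a sentence reconciling notation (their bound is stated for $\|P_{\ge j}(\hat\Sigma-\Sigma)P_{\ge j}\|_\infty$ or for the relative eigenvalue error, and the summation condition here is their effective-rank hypothesis with the parameter $y$). I would therefore present the proof as: (i) the min-max reduction to a one-sided quadratic-form bound on the tail subspace, stated cleanly as a lemma; (ii) invoke the concentration bound from \cite{RW17} (or \cite{MR4263288}) for the restricted operator, checking that the stated summation condition matches their effective-dimension hypothesis at scale $y$; (iii) note the lower tail follows by the symmetric argument with $\Sigma-\hat\Sigma$ on $\operatorname{span}(u_1,\dots,u_j)$. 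The main obstacle is purely bookkeeping: matching the precise form of the summation condition $\frac{1}{y\wedge1}\sum_{k>j}\frac{\lambda_k}{\lambda_j-\lambda_k+y\lambda_j}$ to the effective-rank condition in the cited reference, and tracking how the $y\wedge y^2$ dependence in the exponent arises from the Bernstein-type (sub-exponential) tail of the quadratic form — there is no genuinely hard analytic step once the reduction to \cite{RW17} is in place.
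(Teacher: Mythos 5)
Your proposal ends where the paper's proof begins and ends: Lemma \ref{ThmEvDev} is proved in the paper by a one-line citation of Theorem 2.15 in Rei\ss{} and Wahl \cite{RW17}, whose hypothesis is exactly the stated gap-weighted sum with $\mu=(1+y)\lambda_j$ (note $\lambda_j-\lambda_k+y\lambda_j=(1+y)\lambda_j-\lambda_k$), so your recommended route coincides with the paper's. One caveat on your auxiliary sketch: the plain min--max reduction to $\sup\{\langle v,(\hat\Sigma-\Sigma)v\rangle : v\in\overline{\operatorname{span}}(u_k:k\ge j),\ \|v\|=1\}\ge y\lambda_j$ followed by an unweighted net/operator-norm bound would need roughly $\operatorname{tr}_{>j}(\Sigma)/\lambda_j\lesssim n(y\wedge y^2)$, which is stronger than the stated condition when $y<1$; the gap-weighted hypothesis is what the weighted (relative) perturbation argument of \cite{RW17} buys, so the sketch should not be read as a self-contained substitute for the citation.
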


\begin{proof}
See Theorem  2.15 in \cite{RW17}.
\end{proof}

\begin{corollary}\label{CorEVConcLB}
If  Assumption \ref{SubGauss} holds, then we have
\begin{equation*}
\forall j\le c_1n/4, \quad\mathbb{P}(\hat\lambda_j\ge   \lambda_j/2)\ge  1- e^{1-c_2n/4},
\end{equation*}
where $c_1$ and $c_2$ are the constants in Lemma \ref{ThmEvDev}.
\end{corollary}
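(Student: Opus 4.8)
The plan is to apply the lower-tail bound in Lemma \ref{ThmEvDev} with the specific choice $y=1/2$. With this choice the deviation event $\{\hat\lambda_j-\lambda_j<-y\lambda_j\}$ is exactly the complement of $\{\hat\lambda_j\ge\lambda_j/2\}$, so it suffices to verify the hypothesis of the second part of Lemma \ref{ThmEvDev} for $y=1/2$ and then read off the conclusion and pass to the complementary event.

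First I would check the summability condition. For $y=1/2$ we have $y\wedge 1=1/2$, so the condition reads
\[
2\sum_{l<j}\frac{\lambda_l}{\lambda_l-\lambda_j+\lambda_j/2}=2\sum_{l<j}\frac{\lambda_l}{\lambda_l-\lambda_j/2}\le c_1 n.
\]
The key elementary observation is that $l<j$ implies $\lambda_l\ge\lambda_j$, hence $\lambda_l-\lambda_j/2\ge\lambda_l/2$, so each summand is at most $2$. Consequently the left-hand side is bounded by $4(j-1)<4j$ uniformly in the eigenvalue configuration, and the condition is satisfied as soon as $4j\le c_1 n$, i.e.\ $j\le c_1 n/4$.

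Then Lemma \ref{ThmEvDev} applies and yields, for all $j\le c_1 n/4$,
\[
\mathbb{P}\big(\hat\lambda_j-\lambda_j<-\lambda_j/2\big)\le e^{1-c_2 n(y\wedge y^2)}=e^{1-c_2 n/4},
\]
where we used $y\wedge y^2=1/4$ for $y=1/2$. Taking complements gives $\mathbb{P}(\hat\lambda_j\ge\lambda_j/2)\ge 1-e^{1-c_2 n/4}$, which is the claim. There is no genuine obstacle here; the only step requiring a moment of care is the uniform bound $\lambda_l/(\lambda_l-\lambda_j/2)\le 2$ for $l<j$, which is what makes the hypothesis of Lemma \ref{ThmEvDev} depend only on $j$ and $n$ rather than on the detailed spectrum.
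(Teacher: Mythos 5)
Your proof is correct and follows exactly the paper's route: the paper's own (one-line) proof also invokes the second claim of Lemma \ref{ThmEvDev} with $y=1/2$, and your verification that $\lambda_l/(\lambda_l-\lambda_j/2)\le 2$ for $l<j$, giving the bound $4j\le c_1n$, is precisely the detail the paper leaves implicit. Nothing to correct.
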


\begin{proof}
Corollary \ref{CorEVConcLB} follows from the second claim in Lemma \ref{ThmEvDev} applied with $y=1/2$.
\end{proof}

\begin{corollary}\label{CorEVConcUB}
Grant Assumption \ref{SubGauss}. Let $j\ge  1$ be such that  
\begin{align*}
\frac{\operatorname{tr}_{>  j-1}(\Sigma)}{\lambda_j}\ge  c_1n.
\end{align*}
Then, with probability at least $1-e^{1-c_2n}$, we have
\begin{align*}
\hat\lambda_{j}\le \frac{2}{c_1}\frac{\operatorname{tr}_{>  j-1}(\Sigma)}{n},
\end{align*}
where $c_1$ and $c_2$ are the constants in Lemma \ref{ThmEvDev}.
\end{corollary}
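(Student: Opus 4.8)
The plan is to obtain the bound as a direct consequence of the upper-tail estimate in Lemma \ref{ThmEvDev}, applied with a carefully chosen value of the free parameter $y$. Concretely, I would set
\[
y = \frac{\operatorname{tr}_{>j-1}(\Sigma)}{c_1\lambda_j n},
\]
so that $(1+y)\lambda_j = \lambda_j + \operatorname{tr}_{>j-1}(\Sigma)/(c_1 n)$ is already of the order of the right-hand side we are aiming for. The standing hypothesis $\operatorname{tr}_{>j-1}(\Sigma)/\lambda_j \ge c_1 n$ translates into $y \ge 1$, which will be used twice: it makes $y\wedge 1 = 1$ and $y\wedge y^2 = y$.

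The first step is to verify that this $y$ satisfies the hypothesis of the first part of Lemma \ref{ThmEvDev}. Since $\lambda_j - \lambda_k \ge 0$ for all $k > j$, the denominators are bounded below by $y\lambda_j$, whence
\[
\frac{1}{y\wedge 1}\sum_{k>j}\frac{\lambda_k}{\lambda_j - \lambda_k + y\lambda_j} \le \frac{1}{y\lambda_j}\operatorname{tr}_{>j}(\Sigma) \le \frac{\operatorname{tr}_{>j-1}(\Sigma)}{y\lambda_j} = c_1 n,
\]
using $\operatorname{tr}_{>j}(\Sigma)\le\operatorname{tr}_{>j-1}(\Sigma)$ and the definition of $y$. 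Lemma \ref{ThmEvDev} then gives
\[
\mathbb{P}\big(\hat\lambda_j - \lambda_j > y\lambda_j\big) \le e^{1 - c_2 n(y\wedge y^2)} = e^{1 - c_2 n y} \le e^{1 - c_2 n},
\]
the last inequality using $y\ge 1$.

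The second step is to read off the conclusion on the complementary event $\{\hat\lambda_j \le (1+y)\lambda_j\}$, which has probability at least $1 - e^{1-c_2 n}$: there $\hat\lambda_j \le \lambda_j + \operatorname{tr}_{>j-1}(\Sigma)/(c_1 n)$, and since the hypothesis is precisely the bound $\lambda_j \le \operatorname{tr}_{>j-1}(\Sigma)/(c_1 n)$, we conclude $\hat\lambda_j \le (2/c_1)\operatorname{tr}_{>j-1}(\Sigma)/n$. There is no real obstacle here: the only point requiring a moment's care is to choose $y$ large enough to satisfy the hypothesis of Lemma \ref{ThmEvDev} (which forces $y \ge \operatorname{tr}_{>j-1}(\Sigma)/(c_1\lambda_j n)$) while keeping $(1+y)\lambda_j$ within a constant factor of $\operatorname{tr}_{>j-1}(\Sigma)/n$; the regime $\operatorname{tr}_{>j-1}(\Sigma)/\lambda_j \ge c_1 n$ is exactly what makes these two requirements compatible, and simultaneously upgrades the rate $y\wedge y^2$ to the linear rate $y$ needed for the $e^{1-c_2 n}$ probability.
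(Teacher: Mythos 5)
Your proof is correct and follows essentially the same route as the paper: the same choice $y=\operatorname{tr}_{>j-1}(\Sigma)/(c_1 n\lambda_j)\ge 1$, the same verification of the hypothesis of the first part of Lemma \ref{ThmEvDev} using $\lambda_j-\lambda_k+y\lambda_j\ge y\lambda_j$, and the same final step $\lambda_j\le\operatorname{tr}_{>j-1}(\Sigma)/(c_1 n)$ to absorb the $\lambda_j$ term. No gaps.
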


\begin{proof}
For $y\ge  1$, we have 
\begin{align*}
\frac{1}{y\wedge 1}\sum_{k> j}\frac{\lambda_k}{\lambda_j-\lambda_k+y\lambda_j}\le \frac{1}{y}\frac{\operatorname{tr}_{>  j-1}(\Sigma)}{\lambda_j}.
\end{align*}
Hence, the first assumption in Lemma \ref{ThmEvDev} is satisfied for
\begin{align*}
y=\frac{1}{c_1n}\frac{\operatorname{tr}_{>  j-1}(\Sigma)}{\lambda_j}\ge  1.
\end{align*}
We conclude that 
\begin{align*}
\hat\lambda_{j}\le \lambda_j+\frac{1}{c_1}\frac{\operatorname{tr}_{>  j-1}(\Sigma)}{n}\le \frac{2}{c_1}\frac{\operatorname{tr}_{>  j-1}(\Sigma)}{n}
\end{align*}
with probability at least $1-e^{1-c_2n}$.
\end{proof}

The following lemma is a version of Lemma 4 in \cite{MR4263288}. It is the only point that requires the independence of the Karhunen-Loève coefficients $\eta_j=\lambda_j^{-1/2}\langle X,u_j\rangle$, $j\ge 1$.

\begin{lemma}\label{LemBartlett}
Grant Assumption \ref{SubGauss} and suppose that the Karhunen-Loève coefficients $\eta_1$, $\eta_2$, $\dots$ are independent. Then there exist constants $c_2\in(0,1)$ and $C_1>1$ depending only on $L$ such that the following holds. If
\begin{align*}
\frac{\operatorname{tr}_{> n}(\Sigma)}{\lambda_{n+1}}\ge  C_1n,
\end{align*}
then
\begin{align*}
\hat\lambda_{n}\ge  \frac{1}{2}\frac{\operatorname{tr}_{> n}(\Sigma)}{n}
\end{align*}
with probability at least $1-e^{1-c_2n}$.
\end{lemma}

\begin{proof}
Set $X_{i}'=P_{> n}X_i$, $1\le i\le n$, and let $K_n'=n^{-1}(\langle X_i',X_{i'}'\rangle)_{i,i'=1}^n$ be the corresponding normalized Gram matrix. Using that the $n$ largest eigenvalues of the empirical covariance operator $\hat\Sigma=n^{-1}S_n^*S_n$ coincide with the $n$ eigenvalues of the normalized Gram matrix $K_n=n^{-1}S_nS_n^*$ and that $K_n- K_n'$ is positive semi-definite, we have 
\begin{align*}
\hat\lambda_n=\lambda_n(\hat\Sigma)=\lambda_n(K_n)\ge  \lambda_n(K_n'),
\end{align*}
where $\lambda_n(\cdot)$ denotes the $n$th largest eigenvalue. Since the expectation of $K_n'$ is equal to $(\operatorname{tr}_{>n}(\Sigma)/n)\cdot I_n$, we get 
\begin{align*}
\hat\lambda_n\ge  \frac{\operatorname{tr}_{>n}(\Sigma)}{n}-\|K_n'-\mathbb{E}K_n'\|_{\infty}.
\end{align*}
Note that $(K_n')_{ii'} = n^{-1}\sum_{j>n} \lambda_j \eta_{ji} \eta_{ji'}$, where the $\eta_{ji}=\lambda_j^{-1/2} \langle X_i,u_j \rangle$, $j \ge 1$, $1\le i\le n$, are independent with $\sup_{j\ge 1} \| \eta_{ji} \|_{\psi_2} \le L$ for all $i\ge 1$. Hence, combining Bernstein's inequality with an $\varepsilon$-net argument to control the operator norm, we get
\begin{align*}
\|K_n'-\mathbb{E}K_n'\|_{\infty}&\le C\frac{\sqrt{\sum_{k>n}\lambda_k^2}}{\sqrt{n}}+C\lambda_{n+1}\\
&\le C\sqrt{\lambda_{n+1}}\frac{\sqrt{\operatorname{tr}_{>n}(\Sigma)}}{\sqrt{n}}+C\lambda_{n+1}\le \frac{\operatorname{tr}_{>n}(\Sigma)}{4n}+(C^2+C)\lambda_{n+1}
\end{align*}
with probability at least $1-e^{1-c_2n}$. For more details, see the proof of Lemma 4 in \cite{MR4263288}.
The claim now follows, provided that $C^2+C\le C_1/4$.
\end{proof}

\begin{corollary} \label{CorBartlett}
Grant Assumption \ref{SubGauss} and suppose that the Karhunen-Loève coefficients $\eta_1,\eta_2,\dots$ are independent. Let $j^*$ be defined as in Theorem \ref{BenignOverfittingResult} with $B\ge C_1+1$. If $j^* < n$, then
\begin{align*}
    \frac{1}{2}\frac{\operatorname{tr}_{> n}(\Sigma)}{n}\le \hat\lambda_{n}\le \cdots \le \hat\lambda_{j^*+1}\le \frac{4}{c_1}\frac{\operatorname{tr}_{> n}(\Sigma)}{n}
\end{align*}
with probability at least $1-2e^{1-c_2n}$.  Here, $c_1$ is the constant from Corollary \ref{CorEVConcUB} and $C_1$ is the constant from Lemma~\ref{LemBartlett}, while $c_2$ is the minimum of the constants from Corollary \ref{CorEVConcUB} and Lemma~\ref{LemBartlett}.
\end{corollary}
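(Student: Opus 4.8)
The plan is to control the whole block $\hat\lambda_{j^*+1}\ge\cdots\ge\hat\lambda_n$ by bounding only its two extreme members: I would get a lower bound on $\hat\lambda_n$ from Lemma~\ref{LemBartlett} and an upper bound on $\hat\lambda_{j^*+1}$ from Corollary~\ref{CorEVConcUB}; the ordering of the intermediate eigenvalues is automatic, and a union bound over the two events yields the stated probability $1-2e^{1-c_2n}$ with $c_2$ the smaller of the two constants.

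The one genuine ingredient is a deterministic estimate showing that the effective rank $r_j:=\operatorname{tr}_{>j}(\Sigma)/\lambda_{j+1}$ does not collapse as $j$ runs from $j^*$ up to $n$. For $j^*\le j\le n$ I would write $\operatorname{tr}_{>j}(\Sigma)=\operatorname{tr}_{>j^*}(\Sigma)-\sum_{k=j^*+1}^{j}\lambda_k\ge\operatorname{tr}_{>j^*}(\Sigma)-(j-j^*)\lambda_{j^*+1}$; since by definition of $j^*$ one has $\operatorname{tr}_{>j^*}(\Sigma)\ge Bn\lambda_{j^*+1}$ and $j-j^*\le n$, this gives $\operatorname{tr}_{>j}(\Sigma)\ge(1-1/B)\operatorname{tr}_{>j^*}(\Sigma)>0$, so in particular $\operatorname{tr}_{>j}(\Sigma)>0$ and $\lambda_{j+1}>0$ throughout this range. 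Combined with $\lambda_{j+1}\le\lambda_{j^*+1}$ this yields $r_j\ge r_{j^*}-(j-j^*)\ge Bn-n=(B-1)n\ge C_1n$, the last step using $B\ge C_1+1$.

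With $j=n$ this is exactly hypothesis~\eqref{EqRemainderTrace} of Lemma~\ref{LemBartlett} (and it forces $\operatorname{tr}_{>n}(\Sigma)>0$), so $\hat\lambda_n\ge\frac12\operatorname{tr}_{>n}(\Sigma)/n$ on an event $\Omega_1$ of probability at least $1-e^{1-c_2n}$. For the upper bound I would apply Corollary~\ref{CorEVConcUB} with $j=j^*+1$: its hypothesis $\operatorname{tr}_{>j^*}(\Sigma)/\lambda_{j^*+1}\ge c_1n$ holds because this ratio is $\ge Bn\ge c_1n$ (as $c_1<1$), giving $\hat\lambda_{j^*+1}\le\frac{2}{c_1}\operatorname{tr}_{>j^*}(\Sigma)/n$ on an event $\Omega_2$ of probability at least $1-e^{1-c_2n}$. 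Finally, the case $j=n$ of the deterministic estimate gives $\operatorname{tr}_{>n}(\Sigma)\ge(1-1/B)\operatorname{tr}_{>j^*}(\Sigma)\ge\frac12\operatorname{tr}_{>j^*}(\Sigma)$, using $B\ge C_1+1\ge2$, so on $\Omega_2$ we also get $\hat\lambda_{j^*+1}\le\frac{4}{c_1}\operatorname{tr}_{>n}(\Sigma)/n$. Intersecting $\Omega_1$ and $\Omega_2$ and invoking the monotonicity $\hat\lambda_{j^*+1}\ge\cdots\ge\hat\lambda_n$ then finishes the proof.

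I do not expect a serious obstacle here: everything reduces to the two cited perturbation results plus the elementary effective-rank bookkeeping above. The main point to be careful about is choosing $B$ large enough — the condition $B\ge C_1+1$ (which in particular gives $B\ge2$) is precisely what makes ``$r_j\ge C_1n$ for all $j\in[j^*,n]$'' and ``$\operatorname{tr}_{>j^*}(\Sigma)\le2\operatorname{tr}_{>n}(\Sigma)$'' hold simultaneously — together with keeping track of which constant enters where ($c_1$ from Corollary~\ref{CorEVConcUB}, $C_1$ from Lemma~\ref{LemBartlett}, and $c_2$ their minimum).
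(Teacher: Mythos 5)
Your proposal is correct and follows essentially the same route as the paper: the lower bound via Lemma \ref{LemBartlett} (after verifying \eqref{EqRemainderTrace} through the same deterministic estimate $\operatorname{tr}_{>n}(\Sigma)/\lambda_{n+1}\ge \operatorname{tr}_{>j^*}(\Sigma)/\lambda_{j^*+1}-n\ge C_1n$), the upper bound via Corollary \ref{CorEVConcUB} at $j=j^*+1$ combined with $\operatorname{tr}_{>j^*}(\Sigma)\le 2\operatorname{tr}_{>n}(\Sigma)$, and a union bound; your uniform-in-$j$ effective-rank bookkeeping is just a mild (and correct) generalization of the paper's $j=n$ case.
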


\begin{proof}
Since $B\ge C_1+1$, we have 
\begin{align}
    \frac{\operatorname{tr}_{> n}(\Sigma)}{\lambda_{j^*+1}}&\ge \frac{\operatorname{tr}_{> j^*}(\Sigma)}{\lambda_{j^*+1}}-n\ge \Big(1-\frac{1}{C_1+1}\Big)\frac{\operatorname{tr}_{> j^*}(\Sigma)}{\lambda_{j^*+1}}\ge \frac{1}{2}\frac{\operatorname{tr}_{> j^*}(\Sigma)}{\lambda_{j^*+1}},\label{eq:change:remainder:trace}\\
    \frac{\operatorname{tr}_{> n}(\Sigma)}{\lambda_{n+1}}&\ge \frac{\operatorname{tr}_{> j^*}(\Sigma)}{\lambda_{j^*+1}}-n\ge (C_1+1)n-n= C_1n,\nonumber
\end{align}
as can be seen from the definition of $j^*$. Hence, Lemma \ref{LemBartlett} gives the lower bound, while the upper bound follows from Corollary \ref{CorEVConcUB} applied with $j=j^*+1$ and the bound $\operatorname{tr}_{> j^*}(\Sigma)\le 2\operatorname{tr}_{> n}(\Sigma)$. 
\end{proof}

\subsection{Change of norm argument} 

In this section, we present a change of norm formula.

\begin{lemma}\label{LemChangeNorm1.main} 
Let $\mu\ge  0$. Then we have
\begin{align*}
\forall h\in\mathcal{H},\quad |\|h\|_n^2-\|h\|_{L^2(\mathbb{P}^X)}^2|\le (1/2) (\|h\|_{L^2(\mathbb{P}^X)}^2+\mu\|h\|^2)
\end{align*}
if and only if 
\begin{align}\label{eq_changeofnorm_event}
\|(\Sigma+\mu)^{-1/2}(\hat\Sigma-\Sigma)(\Sigma+\mu)^{-1/2}\|_{\operatorname{\infty}}\le 1/2.
\end{align}
Moreover, if \eqref{eq_changeofnorm_event} holds, then $\hat\lambda_j\le 3\lambda_j/2+\mu/2$ for all $j\ge  1$.
\end{lemma}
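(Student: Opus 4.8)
The plan is to reduce the claimed norm equivalence to a quadratic-form inequality between operators and then conjugate by $(\Sigma+\mu)^{-1/2}$. First I would observe that $\|h\|_n^2=\langle h,\hat\Sigma h\rangle$, $\|h\|_{L^2(\mathbb{P}^X)}^2=\langle h,\Sigma h\rangle$ and $\mu\|h\|^2=\mu\langle h,h\rangle$, so the asserted bound is equivalent to
\[
\forall h\in\mathcal{H},\qquad \big|\langle h,(\hat\Sigma-\Sigma)h\rangle\big|\le \tfrac12\,\langle h,(\Sigma+\mu)h\rangle,
\]
which, since $\langle h,(\Sigma+\mu)h\rangle\ge 0$, is nothing but the two-sided operator inequality $-\tfrac12(\Sigma+\mu)\preceq \hat\Sigma-\Sigma\preceq \tfrac12(\Sigma+\mu)$ in the sense of quadratic forms.

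Next I would perform the change of variables $g=(\Sigma+\mu)^{1/2}h$. When $\mu>0$ the operator $(\Sigma+\mu)^{1/2}$ is bounded, positive and bounded below by $\mu^{1/2}$, hence a bijection of $\mathcal{H}$ with bounded inverse $(\Sigma+\mu)^{-1/2}$, so $h$ ranges over $\mathcal{H}$ if and only if $g$ does. Substituting $h=(\Sigma+\mu)^{-1/2}g$ and using self-adjointness of $(\Sigma+\mu)^{-1/2}$ turns the displayed inequality into
\[
\forall g\in\mathcal{H},\qquad \big|\langle g,Ag\rangle\big|\le \tfrac12\|g\|^2,\qquad A:=(\Sigma+\mu)^{-1/2}(\hat\Sigma-\Sigma)(\Sigma+\mu)^{-1/2}.
\]
The operator $A$ is self-adjoint and bounded (a conjugate of the trace-class operator $\hat\Sigma-\Sigma$), so $\sup_{g\neq 0}|\langle g,Ag\rangle|/\|g\|^2=\|A\|_\infty$; thus the last display holds for all $g$ if and only if $\|A\|_\infty\le 1/2$, which is precisely \eqref{eq_changeofnorm_event}. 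This gives both implications at once.

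For the ``moreover'' part I would use only the upper half of the operator inequality: \eqref{eq_changeofnorm_event} yields $A\preceq \tfrac12 I$, hence $\hat\Sigma-\Sigma\preceq \tfrac12(\Sigma+\mu)$, i.e.\ $\hat\Sigma\preceq \tfrac32\Sigma+\tfrac12\mu I$. Since $\sum_{j\ge 1}P_j=I$ we have $\tfrac32\Sigma+\tfrac12\mu I=\sum_{j\ge 1}(\tfrac32\lambda_j+\tfrac12\mu)P_j$, whose ordered eigenvalues are $\tfrac32\lambda_j+\tfrac12\mu$ (decreasing in $j$). Applying Weyl's monotonicity of ordered eigenvalues (the Courant--Fischer min-max principle) to $\hat\Sigma\preceq \tfrac32\Sigma+\tfrac12\mu I$ then gives $\hat\lambda_j=\lambda_j(\hat\Sigma)\le \tfrac32\lambda_j+\tfrac12\mu$ for every $j\ge 1$.

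The only point needing care is the degenerate case $\mu=0$ (or more generally $\Sigma$ not boundedly invertible), where $(\Sigma+\mu)^{-1/2}$ is unbounded and $A$ need not be a bounded operator: there one either restricts to the regime in which $A$ still makes sense (e.g.\ $\mathcal{H}$ finite-dimensional with all $\lambda_j>0$) or runs the equivalence on the dense subspace $(\Sigma+\mu)^{1/2}\mathcal{H}$ and passes to the closure. Apart from this, the argument is a routine rewriting together with standard eigenvalue monotonicity, so I expect no genuine obstacle.
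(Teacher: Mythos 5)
Your argument is correct, and it is the natural one. Note that the paper itself does not spell out a proof of this lemma: it simply cites Lemmas 16 and 44 of \cite{WahlCelisse}, so your write-up is a self-contained reconstruction of (presumably) the same standard argument rather than a different route. The key steps all check out: $\|h\|_n^2=\langle h,\hat\Sigma h\rangle$ and $\|h\|_{L^2(\mathbb{P}^X)}^2=\langle h,\Sigma h\rangle$ reduce the claim to the two-sided quadratic-form inequality $|\langle h,(\hat\Sigma-\Sigma)h\rangle|\le \tfrac12\langle h,(\Sigma+\mu)h\rangle$; for $\mu>0$ the substitution $g=(\Sigma+\mu)^{1/2}h$ is a bijection of $\mathcal{H}$, and for the self-adjoint bounded operator $A=(\Sigma+\mu)^{-1/2}(\hat\Sigma-\Sigma)(\Sigma+\mu)^{-1/2}$ the identity $\sup_{g\neq 0}|\langle g,Ag\rangle|/\|g\|^2=\|A\|_\infty$ gives the equivalence with \eqref{eq_changeofnorm_event} in both directions at once; and the eigenvalue bound follows from $\hat\Sigma\preceq \tfrac32\Sigma+\tfrac12\mu I$ via Courant--Fischer/Weyl monotonicity (with the min--max values of $\tfrac32\Sigma+\tfrac12\mu I$ equal to $\tfrac32\lambda_j+\tfrac12\mu$ since the $u_j$ form an orthonormal basis). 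The one delicate point is exactly the one you flag: for $\mu=0$ with $\Sigma$ not boundedly invertible, $(\Sigma+\mu)^{-1/2}$ is unbounded and the statement must be read on the range of $(\Sigma+\mu)^{1/2}$ (or one restricts to the finite-dimensional, invertible case). This is consistent with how the lemma is actually used in the paper: Lemma \ref{LemConcIneq} requires $\mu>0$ in infinite dimensions, and the only $\mu=0$ application (in the proof of Proposition \ref{PropOracleRisk}) is on the $d$-dimensional space $U_d$, where $\Sigma'$ is invertible. So your proposal is complete for all cases the paper needs.
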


\begin{proof}
See Lemmas 16 and 44 in \cite{WahlCelisse}.
\end{proof}

\begin{lemma}\label{LemConcIneq}
Grant Assumption \ref{SubGauss} and let $\mu\ge 0$ (with $\mu>0$ if $\mathcal{H}$ is infinite-dimensional). Then there are constants $c_1,c_2\in (0,1)$ depending only on $\SGN$ such that 
\[
\mathbb{P}(\|(\Sigma+\mu)^{-1/2}(\hat\Sigma-\Sigma)(\Sigma+\mu)^{-1/2}\|_\infty>1/2)\le e^{-c_2n},
\]
whenever
\begin{align}\label{changenormcondition}
\mathcal{N}(\mu):=\sum_{j\ge  1}\frac{\lambda_j}{\lambda_j+\mu} \le c_1n.
\end{align}
\end{lemma}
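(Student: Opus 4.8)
The plan is to strip off the weighting by a whitening transformation, so that the statement turns into a standard concentration inequality for the sample covariance operator of a sub-Gaussian vector whose covariance has trace $\mathcal{N}(\mu)$; then I would combine an expectation bound with an exponential deviation bound and tune the two small constants. First, set $Z_i=(\Sigma+\mu)^{-1/2}X_i$, where $(\Sigma+\mu)^{-1/2}$ is a well-defined bounded operator because $\mu>0$ whenever $\mathcal{H}$ is infinite-dimensional. With $B=(\Sigma+\mu)^{-1/2}\Sigma(\Sigma+\mu)^{-1/2}=\mathbb{E}(Z_1\otimes Z_1)$ and $\hat B=n^{-1}\sum_{i=1}^n Z_i\otimes Z_i$, one checks directly that the operator inside the probability equals $\hat B-B$, and that $\|B\|_\infty=\max_{j\ge1}\lambda_j/(\lambda_j+\mu)\le1$, $\operatorname{tr}(B)=\sum_{j\ge1}\lambda_j/(\lambda_j+\mu)=\mathcal{N}(\mu)$, and $\|\langle g,Z_1\rangle\|_{\psi_2}=\|\langle(\Sigma+\mu)^{-1/2}g,X_1\rangle\|_{\psi_2}\le\SGN\langle g,Bg\rangle^{1/2}\le\SGN\|g\|$ by Assumption~\ref{SubGauss}, i.e.\ $Z_1$ is $\SGN$-sub-Gaussian. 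Hence it suffices to show that for an $\SGN$-sub-Gaussian vector with covariance $B$ obeying $\|B\|_\infty\le1$ and $\operatorname{tr}(B)\le c_1n$, one has $\mathbb{P}(\|\hat B-B\|_\infty>1/2)\le e^{-c_2n}$, with $c_1,c_2$ depending only on $\SGN$.

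Next I would bound $\mathbb{E}\|\hat B-B\|_\infty$. Write $\|\hat B-B\|_\infty=\sup_{\|g\|\le1}\bigl|n^{-1}\sum_{i=1}^n(\langle g,Z_i\rangle^2-\langle g,Bg\rangle)\bigr|$, a centered quadratic empirical process over the unit ball; its driving Gaussian complexity is $\mathbb{E}\sup_{\|g\|\le1}\langle g,B^{1/2}\gamma\rangle=\mathbb{E}\|B^{1/2}\gamma\|\le(\operatorname{tr}B)^{1/2}=\sqrt{\mathcal{N}(\mu)}$ for a standard Gaussian random element $\gamma$ (which shows where $\mathcal{N}(\mu)$ enters). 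The standard concentration bound for sample covariance operators of sub-Gaussian vectors (Koltchinskii--Lounici; see also the techniques in \cite{RW17,WahlCelisse}) then gives, using $\operatorname{tr}(B)=\mathcal{N}(\mu)$ and $\|B\|_\infty\le1$, that with probability at least $1-e^{-t}$,
\[
\|\hat B-B\|_\infty\le C\SGN^2\Bigl(\sqrt{\tfrac{\mathcal{N}(\mu)}{n}}+\tfrac{\mathcal{N}(\mu)}{n}+\sqrt{\tfrac{t}{n}}+\tfrac{t}{n}\Bigr).
\]
To conclude, I would choose first $c_1=c_1(\SGN)$ small enough that on the event $\mathcal{N}(\mu)\le c_1n$ the two $\mathcal{N}(\mu)$-terms are at most $1/4$, and then set $t=c_2n$ with $c_2=c_2(\SGN)$ small enough that the two $t$-terms are at most $1/4$; this yields $\|\hat B-B\|_\infty\le1/2$ with probability at least $1-e^{-c_2n}$, which is the claim.

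The hard part is the infinite-dimensionality: one must control the supremum over the entire unit sphere of $\mathcal{H}$, so a naive $\varepsilon$-net argument does not suffice. A self-contained route would be to split at a rank $k\asymp\mathcal{N}(\mu)\le c_1n$ chosen so that the tail spectral radius $\lambda_{k+1}/(\lambda_{k+1}+\mu)$ is already $\le1/8$ (possible since at most $8\mathcal{N}(\mu)$ indices satisfy $\lambda_j>\mu/7$): the top $k\times k$ block can then be handled by Bernstein's inequality together with a $1/4$-net of the sphere of $\operatorname{span}(u_1,\dots,u_k)$, while the complementary block and the cross term still require a chaining / Gaussian-comparison argument for the quadratic process. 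Since that chaining step is exactly what the cited sample-covariance concentration inequalities already deliver, I would invoke those directly and keep the remaining work to the whitening reduction and the choice of thresholds above.
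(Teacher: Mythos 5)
Your proposal is correct and takes essentially the same route as the paper: whiten by $(\Sigma+\mu)^{-1/2}$, observe that the transformed observations still satisfy Assumption~\ref{SubGauss} with the same constant $\SGN$ while the transformed covariance operator has operator norm at most $1$ and trace $\mathcal{N}(\mu)$, and then invoke the Koltchinskii--Lounici concentration inequality for sample covariance operators with $t\asymp n$ and $c_1$ chosen small depending on $\SGN$. The chaining step you flag as the hard part is exactly what that cited theorem supplies, which is precisely how the paper's proof uses it.
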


\begin{proof}
The claim follows from Theorem 9 in \cite{KL14} (see, e.g., Lemma 3.9 in \cite{RW17} for a similar statement). The main point is that the transformed observations $(\Sigma+\mu)^{-1/2}X_i$, $1\le i\le n$, also satisfy Assumption \ref{SubGauss} (with the same constant $L$). Moreover, the largest eigenvalue and the trace of the transformed covariance operator are given by $\lambda_1/(\lambda_1+\mu)\in (0,1]$ and $\mathcal{N}(\mu)$, respectively.
\end{proof}

\begin{remark}\label{RemBartlettResult}
The quantity $\mathcal{N}(\mu)$ is also called effective dimension (see, e.g.,~\cite{BM18}). 
\end{remark}

\begin{corollary}\label{CorChangeNormFunction}
Grant Assumption \ref{SubGauss}. Let $d\ge 1$. If $d \le c_1n/2$,
then we have
\begin{align*}
\forall h\in\mathcal{H},\quad \|h\|_{L^2(\mathbb{P}^X)}^2\le 2\|h\|_n^2+ 2 \frac{\tr_{>d}(\Sigma)}{c_1n}\|h\|^2
\end{align*}
with probability at least $1-e^{-c_2n}$, where $c_1$ and $c_2$ are the constants in Lemma \ref{LemConcIneq}.
\end{corollary}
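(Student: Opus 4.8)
The plan is to derive the bound from the change of norm formula in Lemma~\ref{LemChangeNorm1.main} together with the concentration inequality in Lemma~\ref{LemConcIneq}, applied with a suitable regularization parameter $\mu$. Indeed, if the event \eqref{eq_changeofnorm_event} holds for some $\mu\ge 0$, then Lemma~\ref{LemChangeNorm1.main} gives $\|h\|_{L^2(\mathbb{P}^X)}^2-\|h\|_n^2\le (1/2)(\|h\|_{L^2(\mathbb{P}^X)}^2+\mu\|h\|^2)$ for every $h\in\mathcal{H}$, which rearranges to $\|h\|_{L^2(\mathbb{P}^X)}^2\le 2\|h\|_n^2+\mu\|h\|^2$. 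Hence it suffices to exhibit a value $\mu\le 2\tr_{>d}(\Sigma)/(c_1n)$ for which Lemma~\ref{LemConcIneq} applies, i.e.\ for which the effective dimension satisfies $\mathcal{N}(\mu)\le c_1n$.

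Assume first that $\tr_{>d}(\Sigma)>0$ and set $\mu=2\tr_{>d}(\Sigma)/(c_1n)$. To estimate $\mathcal{N}(\mu)=\sum_{j\ge 1}\lambda_j/(\lambda_j+\mu)$ I would split the sum at $j=d$. For $j\le d$ one uses the trivial bound $\lambda_j/(\lambda_j+\mu)\le 1$, contributing at most $d$. For $j>d$ one uses $\lambda_j/(\lambda_j+\mu)\le \lambda_j/\mu$, so that this part is at most $\tr_{>d}(\Sigma)/\mu=c_1n/2$. Altogether $\mathcal{N}(\mu)\le d+c_1n/2\le c_1n$, where the last step uses the hypothesis $d\le c_1n/2$. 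Thus Lemma~\ref{LemConcIneq} yields that \eqref{eq_changeofnorm_event} holds with probability at least $1-e^{-c_2n}$, and on this event the first paragraph gives $\|h\|_{L^2(\mathbb{P}^X)}^2\le 2\|h\|_n^2+\mu\|h\|^2=2\|h\|_n^2+2\tr_{>d}(\Sigma)\|h\|^2/(c_1n)$, which is the claimed inequality.

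It remains to address the degenerate case $\tr_{>d}(\Sigma)=0$, which forces $\Sigma$ to have rank at most $d$; then $\mathcal{N}(\mu)\le d\le c_1n$ for every $\mu>0$, so the same argument applies with $\mu$ an arbitrarily small positive constant (in the finite-dimensional case one may even take $\mu=0$ directly), and letting $\mu\downarrow 0$ gives $\|h\|_{L^2(\mathbb{P}^X)}^2\le 2\|h\|_n^2$. I do not expect any genuine obstacle here: the only points requiring care are matching the constant $2/(c_1n)$ exactly, which dictates the choice $\mu=2\tr_{>d}(\Sigma)/(c_1n)$, and absorbing the tail $\sum_{j>d}\lambda_j/(\lambda_j+\mu)$ into $c_1n/2$ so that the budget $\mathcal{N}(\mu)\le c_1n$ still leaves room for the head term $d\le c_1n/2$.
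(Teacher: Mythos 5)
Your proof is correct and follows essentially the same route as the paper: choose $\mu=2\tr_{>d}(\Sigma)/(c_1n)$, bound the effective dimension by $\mathcal{N}(\mu)\le d+\tr_{>d}(\Sigma)/\mu\le c_1n$, and combine Lemma~\ref{LemConcIneq} with Lemma~\ref{LemChangeNorm1.main}. Your explicit treatment of the degenerate case $\tr_{>d}(\Sigma)=0$ is a harmless extra precaution that the paper leaves implicit.
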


\begin{proof}
Observe that
\[
 \mathcal{N}(\mu)\le d+\frac{\operatorname{tr}_{>d}(\Sigma)}{\mu} \le \frac{c_1n}{2}+\frac{\operatorname{tr}_{>d}(\Sigma)}{\mu}.
\] 
Hence, \eqref{changenormcondition} is satisfied for the choice 
\begin{align}\label{CorChangeNormFunctionChoiceMu}
    \mu = 2\frac{\tr_{>d}(\Sigma)}{c_1n},
\end{align}
and the claim follows from combining Lemmas \ref{LemChangeNorm1.main}  and \ref{LemConcIneq}.
\end{proof}

\begin{corollary}\label{CorChangeNormEV}
Grant Assumption \ref{SubGauss}. Let $d\ge 1$.  If $d \le c_1n/2$, then we have
\begin{align*}
\forall j\ge  1,\quad \|\hat u_j\|_{L^2(\mathbb{P}^X)}^2& \le 3\lambda_j+ 4 \frac{\tr_{>d}(\Sigma)}{c_1n}\text{ and }\hat\lambda_j \le 3\lambda_j/2+\frac{\tr_{>d}(\Sigma)}{c_1n}
\end{align*}
with probability at least $1-e^{-c_2n}$, where $c_1$ and $c_2$ are the constants in Lemma \ref{LemConcIneq}.
\end{corollary}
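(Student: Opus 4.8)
The plan is to carry out everything on the single high-probability event produced by Lemma \ref{LemConcIneq} for the choice $\mu = 2\tr_{>d}(\Sigma)/(c_1 n)$ from \eqref{CorChangeNormFunctionChoiceMu}. As in the proof of Corollary \ref{CorChangeNormFunction}, the hypothesis $d \le c_1 n/2$ gives $\mathcal{N}(\mu) \le d + \tr_{>d}(\Sigma)/\mu \le c_1 n/2 + c_1 n/2 = c_1 n$, so Lemma \ref{LemConcIneq} tells us that the event \eqref{eq_changeofnorm_event} holds with probability at least $1 - e^{-c_2 n}$. I will show that on this event both asserted inequalities hold.

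For the eigenvalue bound, the last assertion of Lemma \ref{LemChangeNorm1.main} applied on \eqref{eq_changeofnorm_event} directly yields $\hat\lambda_j \le 3\lambda_j/2 + \mu/2 = 3\lambda_j/2 + \tr_{>d}(\Sigma)/(c_1 n)$ for all $j \ge 1$, which is exactly the second claim.

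For the bound on $\|\hat u_j\|_{L^2(\mathbb{P}^X)}^2$, I would apply Corollary \ref{CorChangeNormFunction} (which also holds on this same event) to $h = \hat u_j$, using that $\|\hat u_j\| = 1$ and that, since $\hat u_j$ is a unit eigenvector of $\hat\Sigma$, one has $\|\hat u_j\|_n^2 = \langle \hat u_j, \hat\Sigma \hat u_j\rangle = \hat\lambda_j$. This gives $\|\hat u_j\|_{L^2(\mathbb{P}^X)}^2 \le 2\hat\lambda_j + 2\tr_{>d}(\Sigma)/(c_1 n)$. Plugging in the eigenvalue bound just established, $2\hat\lambda_j \le 3\lambda_j + 2\tr_{>d}(\Sigma)/(c_1 n)$, so altogether $\|\hat u_j\|_{L^2(\mathbb{P}^X)}^2 \le 3\lambda_j + 4\tr_{>d}(\Sigma)/(c_1 n)$, which is the first claim.

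There is essentially no obstacle here: the only point to be careful about is that both statements are deduced from the same event \eqref{eq_changeofnorm_event}, so no union bound is needed and the probability stays $1 - e^{-c_2 n}$; and one should note that the constants $c_1, c_2$ are precisely those of Lemma \ref{LemConcIneq}, consistent with Corollary \ref{CorChangeNormFunction}. The identity $\|\hat u_j\|_n^2 = \hat\lambda_j$ and $\|\hat u_j\| = 1$ are the small facts that make the substitution work.
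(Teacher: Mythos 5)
Your proof is correct and follows essentially the same route as the paper: choose $\mu = 2\tr_{>d}(\Sigma)/(c_1 n)$, invoke Lemma \ref{LemConcIneq} with the second claim of Lemma \ref{LemChangeNorm1.main} for the eigenvalue bound, and apply Corollary \ref{CorChangeNormFunction} to $h=\hat u_j$ using $\|\hat u_j\|_n^2=\hat\lambda_j$ and $\|\hat u_j\|=1$. The observation that everything happens on the single event \eqref{eq_changeofnorm_event}, so the probability remains $1-e^{-c_2 n}$, matches the paper's accounting.
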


\begin{proof}
With the choice in \eqref{CorChangeNormFunctionChoiceMu}, the claim for the eigenvalues follows from Lemma \ref{LemConcIneq} and the second claim in Lemma \ref{LemChangeNorm1.main}. Combining this with Corollary~\ref{CorChangeNormFunction}, we get
\begin{align*}
\forall j\ge  1,\quad \|\hat u_j\|_{L^2(\mathbb{P}^X)}^2&\le 2\hat\lambda_j+ 2 \frac{\tr_{>d}(\Sigma)}{c_1n} \le 3\lambda_j+ 4 \frac{\tr_{>d}(\Sigma)}{c_1n}
\end{align*}
with probability at least $1-e^{-c_2n}$.
\end{proof}

\subsection{Bounds for the excess risk}\label{SecRW17}

In this section, we state a perturbation bound for the excess risk obtained in \cite{RW17}. 

\begin{lemma}\label{LemExcessRiskHighProbability}
Grant Assumption \ref{SubGauss}. Then there are constants $c_1,c_2,C_1>0$ depending only on $\SGN$ such that the following holds. Let $d\ge  1$, $\mu \ge \lambda_{d+1}$, and let $r\le d$ be the largest integer such that $\lambda_r>\mu$ (with $r=0$ if $\mu\ge \lambda_1$). If
\begin{equation}\label{ThmERBoundCond}
\frac{\lambda_r}{\lambda_r-\lambda_{d+1}}\max\Big(\sum_{j\le r}\frac{\lambda_j}{\lambda_j-\lambda_{d+1}},\sum_{k>d}\frac{\lambda_k}{\lambda_r-\lambda_k}\Big) \le c_1n,
\end{equation}
then, for any $t\ge  1$, we have
\begin{align*}
\mathcal E^{PCA}_{\le d}(\mu)\le C_1\frac{t^2}{n}\sum_{j\le r}\frac{\lambda_j\operatorname{tr}_{>r}(\Sigma)}{\lambda_j-\lambda_{d+1}}
\end{align*}
with probability at least $1-2e^{1-c_2n(\lambda_r-\lambda_{d+1})^2/\lambda_r^2}-e^{-t}$.
\end{lemma}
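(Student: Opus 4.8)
The plan is to follow the perturbation strategy behind \cite{RW17}: reduce $\mathcal E^{PCA}_{\le d}(\mu)$ to a spectrally weighted measure of how much the top population eigenspace $U_r:=\operatorname{span}(u_1,\dots,u_r)$ leaks out of $\hat U_d$, bound this leakage deterministically by a quadratic form in the cross block of the fluctuation $\Delta:=\hat\Sigma-\Sigma$, and finally concentrate that quadratic form.

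\emph{Reduction.} In the decomposition \eqref{EqERDec} for the given $\mu$, we have $\lambda_j\le\lambda_{r+1}\le\mu$ for $r<j\le d$, so the corresponding summands of $\mathcal E^{PCA}_{\le d}(\mu)=\sum_{j\le d}(\lambda_j-\mu)\|P_j\hat P_{>d}\|_2^2$ are nonpositive and may be dropped. Using $\|P_j\hat P_{>d}\|_2^2=\|\hat P_{>d}u_j\|^2$ and $\lambda_j-\mu\le\lambda_j-\lambda_{d+1}$,
\[
\mathcal E^{PCA}_{\le d}(\mu)\le\sum_{j\le r}(\lambda_j-\lambda_{d+1})\,\|\hat P_{>d}u_j\|^2 ,
\]
so it suffices to bound $\|\hat P_{>d}u_j\|$ for each $j\le r$.

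\emph{Deterministic perturbation (main obstacle).} Since $\hat\Sigma$ commutes with $\hat P_{\le d}$, since $\hat P_{>d}\hat P_{\le d}=0$, and since $\hat\Sigma$ restricted to $\ran\hat P_{>d}$ has operator norm $\hat\lambda_{d+1}$, for every $j\le r$,
\begin{align*}
(\lambda_j-\mu)\|\hat P_{>d}u_j\|^2+\langle\hat P_{>d}u_j,\Delta u_j\rangle &=\langle\hat P_{>d}u_j,(\hat\Sigma-\mu)\hat P_{>d}u_j\rangle\\
&\le(\hat\lambda_{d+1}-\mu)\|\hat P_{>d}u_j\|^2 ,
\end{align*}
hence $(\lambda_j-\hat\lambda_{d+1})\|\hat P_{>d}u_j\|^2\le-\langle\hat P_{>d}u_j,\Delta u_j\rangle$. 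By Lemma \ref{ThmEvDev}, applied with $y$ of order $(\lambda_r-\lambda_{d+1})/\lambda_{d+1}$ (its hypotheses being implied by \eqref{ThmERBoundCond}), there is an event of probability at least $1-2e^{1-c_2n(\lambda_r-\lambda_{d+1})^2/\lambda_r^2}$ on which $\hat\lambda_{d+1}\le(\lambda_r+\lambda_{d+1})/2\le\hat\lambda_r$, so that $\lambda_j-\hat\lambda_{d+1}\ge\tfrac12(\lambda_j-\lambda_{d+1})$ for $j\le r$. On this event, a careful estimate of $\langle\hat P_{>d}u_j,\Delta u_j\rangle$—in which the diagonal blocks $P_{\le r}\Delta P_{\le r}$ and $P_{>r}\Delta P_{>r}$ of $\Delta$ are absorbed into the shifts of the empirical eigenvalues and only the cross block $P_{>r}\Delta P_{\le r}$ survives at leading order, the remaining contributions being $o(1)$ times the leading term by \eqref{ThmERBoundCond}—gives $\|\hat P_{>d}u_j\|\le C\|P_{>r}\Delta u_j\|/(\lambda_j-\lambda_{d+1})$, and therefore
\[
\mathcal E^{PCA}_{\le d}(\mu)\le C\sum_{j\le r}\frac{\|P_{>r}\Delta u_j\|^2}{\lambda_j-\lambda_{d+1}} .
\]
Isolating precisely this weighted quadratic form while keeping the remainder negligible using only the effective-rank condition \eqref{ThmERBoundCond} (with no global bound on $\|\Delta\|_\infty$) is the delicate point; this is where the machinery of \cite{RW17} enters.

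\emph{Concentration.} For $j\le r$ we have $\Sigma u_j=\lambda_ju_j\in U_r$, so $P_{>r}\Sigma u_j=0$ and $P_{>r}\Delta u_j=n^{-1}\sum_{i=1}^n\langle u_j,X_i\rangle P_{>r}X_i$. Hence $\sum_{j\le r}(\lambda_j-\lambda_{d+1})^{-1}\|P_{>r}\Delta u_j\|^2=\|n^{-1}\sum_{i=1}^n\zeta_i\|^2$ in the Hilbert space $(\ran P_{>r})^{\oplus r}$, where $\zeta_i=\big((\lambda_j-\lambda_{d+1})^{-1/2}\langle u_j,X_i\rangle P_{>r}X_i\big)_{j\le r}$ and the $\zeta_i$ are i.i.d. By Assumption \ref{SubGauss}, $\mathbb E[\langle u_j,X\rangle^2\langle u_k,X\rangle^2]\le C\lambda_j\lambda_k$, so
\[
\mathbb E\|\zeta_1\|^2=\sum_{j\le r}\frac{\mathbb E[\langle u_j,X\rangle^2\|P_{>r}X\|^2]}{\lambda_j-\lambda_{d+1}}\le C\sum_{j\le r}\frac{\lambda_j\,\tr_{>r}(\Sigma)}{\lambda_j-\lambda_{d+1}},
\]
and \eqref{ThmERBoundCond} bounds the effective rank of $\Cov(\zeta_1)$ by a multiple of $\sum_{j\le r}\lambda_j/(\lambda_j-\lambda_{d+1})\le cn$. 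Since $\|\zeta_1\|$ is assembled from products of sub-Gaussian linear forms and thus has sub-exponential tails, a Bernstein-type (Hanson--Wright) bound for $\|n^{-1}\sum_i\zeta_i\|^2$ gives, for every $t\ge1$,
\[
\sum_{j\le r}\frac{\|P_{>r}\Delta u_j\|^2}{\lambda_j-\lambda_{d+1}}\le C\,\frac{t^2}{n}\sum_{j\le r}\frac{\lambda_j\,\tr_{>r}(\Sigma)}{\lambda_j-\lambda_{d+1}}
\]
with probability at least $1-e^{-t}$. A union bound over the two events then yields the claim. The appearance of $t^2$ rather than $t$ is the only other point of note: it reflects the sub-exponential, product-type tails of this quadratic form, which is why a Bernstein- rather than a Gaussian-type concentration inequality is needed.
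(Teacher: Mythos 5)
Your overall architecture matches the paper's: reduce $\mathcal E^{PCA}_{\le d}(\mu)$ to $\sum_{j\le r}(\lambda_j-\lambda_{d+1})\|\hat P_{>d}u_j\|^2$, bound this by the weighted cross-block quadratic form $\sum_{j\le r}\|P_{>r}(\hat\Sigma-\Sigma)u_j\|^2/(\lambda_j-\lambda_{d+1})$, and concentrate the latter; your concentration step is in substance the paper's Lemma \ref{LemConcHSNorm} (proved there via the $\psi_{1/2}$-moment bound of Lemma \ref{LemConcWeightedNorm}), including the correct $t^2$ dependence. However, the heart of the lemma is the deterministic reduction to the cross block, and there your argument has a genuine gap. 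From your identity you only get $(\lambda_j-\hat\lambda_{d+1})\|\hat P_{>d}u_j\|^2\le-\langle\hat P_{>d}u_j,(\hat\Sigma-\Sigma)u_j\rangle$, whose right-hand side contains the full $(\hat\Sigma-\Sigma)u_j$; passing to $P_{>r}(\hat\Sigma-\Sigma)u_j$ requires absorbing the contribution of the top-block term $\langle P_{\le r}\hat P_{>d}u_j,(\hat\Sigma-\Sigma)u_j\rangle$, which couples the quantities $\|\hat P_{>d}u_l\|$, $l\le r$, through the \emph{random} block $P_{\le r}(\hat\Sigma-\Sigma)P_{\le r}$. You claim this remainder is negligible ``by \eqref{ThmERBoundCond}'', but \eqref{ThmERBoundCond} is a deterministic condition on the population eigenvalues and cannot control that random block. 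What is actually needed (and what the paper's Lemma \ref{lemma_excess_risk_bounds_det}, following the proof of Proposition 3.5 in \cite{RW17}, requires) is the additional high-probability event
\begin{align*}
\|S_{\le r}(\hat\Sigma-\Sigma)S_{\le r}\|_{\infty}\le 1/4,\qquad S_{\le r}=\sum_{j\le r}(\lambda_j-\lambda_{d+1})^{-1/2}P_j,
\end{align*}
which is where the self-bounding absorption of the diagonal block takes place, and whose probability is controlled by a separate Koltchinskii--Lounici-type bound (as in Lemma \ref{LemConcIneq}, cf.\ \eqref{EqConcentrationNormalizedCondition}) using precisely the first half of \eqref{ThmERBoundCond}. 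Your proposal never introduces this event: you spend the entire probability budget $2e^{1-c_2n(\lambda_r-\lambda_{d+1})^2/\lambda_r^2}$ on two eigenvalue deviations (the lower bound on $\hat\lambda_r$ is not even needed), and the step from $\|(\hat\Sigma-\Sigma)u_j\|$ to $C\|P_{>r}(\hat\Sigma-\Sigma)u_j\|$ is asserted rather than proved, with an explicit deferral to ``the machinery of \cite{RW17}''.

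So the gap is concrete: without stating and concentrating the event on $\|S_{\le r}(\hat\Sigma-\Sigma)S_{\le r}\|_\infty$, the claimed per-vector bound $\|\hat P_{>d}u_j\|\le C\|P_{>r}(\hat\Sigma-\Sigma)u_j\|/(\lambda_j-\lambda_{d+1})$ does not follow, and the probability statement of the lemma is misattributed (one of the two factors $e^{1-c_2n(\lambda_r-\lambda_{d+1})^2/\lambda_r^2}$ should account for this block-norm event, not for a second eigenvalue deviation). Minor further points: the ``effective rank of $\Cov(\zeta_1)$'' remark plays no role in the bound you need, and the generic appeal to Hanson--Wright should be replaced by a moment/Bernstein argument of the type in Lemma \ref{LemConcWeightedNorm}, since the summands are only $\psi_1$ (products of sub-Gaussians) and the weighted sum of their squared averages is what must be controlled.
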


Lemma \ref{LemExcessRiskHighProbability} is a high probability version of Proposition 2.10 in \cite{RW17}. Its proof is given in Section \ref{ProofLemExcessRiskHighProbability}.

\begin{corollary} \label{CorExcessRisk0}
Grant Assumption \ref{SubGauss}. Let $j^*$ be defined as in Theorem \ref{BenignOverfittingResult}. Suppose that $j^* \le c_1n/4$. Then, for any $t \ge 1$, we have
\begin{align*}
    {\mathcal E}^{PCA}_{\le j^*}(0) \le C t^2 \frac{j^* \tr_{>j^*}(\Sigma)}{n}
\end{align*}
with probability at least $1-2e^{1-c_2n/4}-e^{-t}$, where $C=4C_1+4/c_1$ and $c_1,c_2$ and $C_1$ are the constants in Lemma \ref{LemExcessRiskHighProbability}.
\end{corollary}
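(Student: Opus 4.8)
The plan is to deduce Corollary~\ref{CorExcessRisk0} from Lemma~\ref{LemExcessRiskHighProbability} by choosing the free parameter $\mu=0$. With $\mu=0$ we have $\mathcal E^{PCA}_{\le d}(0)=\mathcal E^{PCA}_{\le j^*}(0)$ upon taking $d=j^*$; moreover, since all eigenvalues $\lambda_j$ are strictly positive, the largest $r\le j^*$ with $\lambda_r>0$ is $r=j^*$ itself. So the plan reduces to verifying that the structural hypothesis \eqref{ThmERBoundCond} of Lemma~\ref{LemExcessRiskHighProbability} holds under the assumption $j^*\le c_1 n/4$, and then simplifying the resulting bound and probability.

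First I would write out \eqref{ThmERBoundCond} with $r=d=j^*$ and $\lambda_{d+1}=\lambda_{j^*+1}$. The two sums become $\sum_{j\le j^*}\frac{\lambda_j}{\lambda_j-\lambda_{j^*+1}}$ and $\sum_{k>j^*}\frac{\lambda_k}{\lambda_{j^*}-\lambda_k}$, each multiplied by the prefactor $\frac{\lambda_{j^*}}{\lambda_{j^*}-\lambda_{j^*+1}}$. The naive bound $\frac{\lambda_j}{\lambda_j-\lambda_{j^*+1}}\le \frac{\lambda_{j^*}}{\lambda_{j^*}-\lambda_{j^*+1}}$ for $j\le j^*$ controls the first sum by $j^*$ times the prefactor, which is harmless if the spectral gap at $j^*$ is not too small; but there is no a priori lower bound on $\lambda_{j^*}-\lambda_{j^*+1}$. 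This is the main obstacle. I expect the resolution to be exactly the mechanism in \cite{RW17}: one does not apply the lemma at the index $j^*$ with a degenerate gap, but rather exploits that the conclusion of the lemma is monotone/stable in $\mu$, so one may instead take $\mu$ slightly below $\lambda_{j^*}$ — more precisely, one keeps $d=j^*$ but picks $\mu\in(\lambda_{j^*+1},\lambda_{j^*})$ or even replaces the role of $\lambda_r-\lambda_{d+1}$ by $\lambda_r$ at the cost of absolute constants, since $\mathcal E^{PCA}_{\le j^*}(0)\le \mathcal E^{PCA}_{\le j^*}(\mu)$ whenever $\mu\le\lambda_{j^*}$ (as the summands $(\lambda_j-\mu)\|P_j\hat P_{>j^*}\|_2^2$ are nonnegative and increasing as $\mu$ decreases). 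The cleanest route is: choose $\mu=\lambda_{j^*+1}$, so $r$ is the largest index with $\lambda_r>\lambda_{j^*+1}$, which is at most $j^*$; then in \eqref{ThmERBoundCond} the factors $\lambda_j-\lambda_{d+1}$ in the denominators are now genuinely of order $\lambda_j$ because $\lambda_j>\lambda_{j^*+1}=\mu$, and the degenerate-gap problem disappears.

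With that choice, I would bound: $\frac{\lambda_r}{\lambda_r-\lambda_{j^*+1}}\le 2$ is false in general, so instead I use that $\sum_{j\le r}\frac{\lambda_j}{\lambda_j-\lambda_{j^*+1}}$ and $\sum_{k>j^*}\frac{\lambda_k}{\lambda_r-\lambda_k}$ — wait, one still has the overall prefactor $\lambda_r/(\lambda_r-\lambda_{j^*+1})$. The honest path, matching the constant $C=4C_1+4/c_1$ advertised, is to observe that by the definition of $j^*$ as a minimum, for every $j\le j^*$ one has $\tr_{>j-1}(\Sigma)/\lambda_j<Bn$ is NOT what's used here; rather, what's used is that $j^*\le c_1n/4$ forces the condition \eqref{ThmERBoundCond} after splitting $\sum_{k>j^*}\lambda_k/(\lambda_r-\lambda_k)$: on the event one bounds $\lambda_r-\lambda_k\ge \lambda_r/2\ge \lambda_{j^*+1}$ for, say, $\lambda_k\le\lambda_r/2$, and handles the finitely many (at most $j^*$) indices with $\lambda_k>\lambda_r/2$ separately — but those are in $\{j\le r\}$'s complement only trivially. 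I would therefore follow the telescoping/blocking argument from the proof of Proposition~2.10 in \cite{RW17}: partition according to whether eigenvalues are within a constant factor of $\lambda_{j^*}$, use $j^*\le c_1n/4$ to absorb the "close" block into $c_1n$, and use $\tr_{>j^*}(\Sigma)$ to control the "far" block. Finally, simplifying $C_1\frac{t^2}{n}\sum_{j\le r}\frac{\lambda_j\tr_{>r}(\Sigma)}{\lambda_j-\lambda_{d+1}}\le C_1\frac{t^2}{n}\cdot 2j^*\cdot\tr_{>j^*}(\Sigma)$ after the gap is controlled, adding the slack $4/c_1$ that comes from the blocking, and replacing the probability exponent $(\lambda_r-\lambda_{d+1})^2/\lambda_r^2$ by its lower bound $1/4$ (again from controlling the gap, or by the same $\mu$-shift) yields the stated bound with probability at least $1-2e^{1-c_2n/4}-e^{-t}$.
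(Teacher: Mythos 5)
There is a genuine gap, and it occurs exactly at the point you flag as the ``main obstacle.'' First, the monotonicity you invoke goes the wrong way: since each summand $(\lambda_j-\mu)\|P_j\hat P_{>j^*}\|_2^2$ \emph{increases} as $\mu$ decreases, one has ${\mathcal E}^{PCA}_{\le j^*}(0)\ge {\mathcal E}^{PCA}_{\le j^*}(\mu)$ for $\mu\ge 0$, not $\le$, so you cannot pass from $\mu=0$ to a positive $\mu$ for free. The paper's first step is instead the exact compensation
\begin{align*}
{\mathcal E}^{PCA}_{\le j^*}(0)={\mathcal E}^{PCA}_{\le j^*}(\mu)+\mu\,\tr(P_{\le j^*}\hat P_{>j^*})\le {\mathcal E}^{PCA}_{\le j^*}(\mu)+\mu\, j^*,
\end{align*}
and it is this term $\mu j^*$ (not any ``blocking slack'') that produces the $4/c_1$ in the advertised constant $C=4C_1+4/c_1$. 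Second, your proposed choice $\mu=\lambda_{j^*+1}$ does not remove the degenerate-gap problem: the prefactor $\lambda_r/(\lambda_r-\lambda_{j^*+1})$ in \eqref{ThmERBoundCond} remains uncontrolled (as you note), and, just as importantly, the probability guarantee of Lemma \ref{LemExcessRiskHighProbability} degrades like $e^{-c_2 n(\lambda_r-\lambda_{d+1})^2/\lambda_r^2}$, which is useless without a genuine gap; your closing step of ``replacing the exponent by its lower bound $1/4$'' has no justification in this setting.

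The ingredient your sketch never uses is the defining property of $j^*$ from Theorem \ref{BenignOverfittingResult}: $\tr_{>j^*}(\Sigma)\ge Bn\,\lambda_{j^*+1}$. This is what allows one to take a \emph{large} shift, $\mu=4\tr_{>j^*}(\Sigma)/(c_1n)\ge 2\lambda_{j^*+1}$, so that for $d=j^*$ every denominator in \eqref{ThmERBoundCond} is of the order of the eigenvalue itself: $\lambda_j/(\lambda_j-\lambda_{j^*+1})\le 2$ for all $j\le r$ (hence the first sum is at most $2j^*\le c_1n/2$), $\sum_{k>j^*}\lambda_k/(\lambda_r-\lambda_k)\le 2\tr_{>j^*}(\Sigma)/\mu= c_1n/2$, and the probability exponent satisfies $(\lambda_r-\lambda_{j^*+1})^2/\lambda_r^2\ge 1/4$. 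No spectral gap at $j^*$ and no telescoping/blocking argument \`a la Proposition~2.10 of \cite{RW17} is needed; your fallback to such a blocking argument is not worked out and, as sketched, $j^*\le c_1n/4$ alone cannot force \eqref{ThmERBoundCond} for $\mu$ near $\lambda_{j^*+1}$. The proof is then completed by applying Lemma \ref{LemExcessRiskHighProbability} with this $\mu$, using $\tr_{>r}(\Sigma)\le (j^*-r)\mu+\tr_{>j^*}(\Sigma)\le 2\tr_{>j^*}(\Sigma)$ (here $j^*\le c_1n/4$ enters), and adding back $\mu j^*=(4/c_1)\,j^*\tr_{>j^*}(\Sigma)/n$.
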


\begin{proof}
Observe that
\begin{align}
    \mathcal E^{PCA}_{\le j^*}(0) \le \mathcal E^{PCA}_{\le j^*}(\mu) + \mu \tr(P_{\le j^*} \hat P_{>j^*}) \le \mathcal E^{PCA}_{\le j^*}(\mu) + \mu j^*. \label{EqDecompExcessRisk0}
\end{align}
In order to apply Lemma \ref{LemExcessRiskHighProbability}, we need to choose $\mu$ such that \eqref{ThmERBoundCond} is satisfied. First, note that if
\begin{align}\label{eq:lower:bound:mu}
    \mu\ge 2\lambda_{j^*+1},
\end{align}
then
\begin{align*}
    \frac{\lambda_r}{\lambda_r-\lambda_{j^*+1}}&\le \frac{2\lambda_{j^*+1}}{2\lambda_{j^*+1}-\lambda_{j^*+1}}=2,\\
    \sum_{j \le r} \frac{\lambda_j}{\lambda_j-\lambda_{j^*+1}}&\le j^*\frac{2\lambda_{j^*+1}}{2\lambda_{j^*+1}-\lambda_{j^*+1}} = 2j^*\le \frac{c_1n}{2},\\
    \sum_{k>j^*} \frac{\lambda_k}{\lambda_r-\lambda_k} &\le \frac{\tr_{>j^*}(\Sigma)}{\lambda_r-\lambda_{j^*+1}}\le 2\frac{\tr_{>j^*}(\Sigma)}{\mu}.
\end{align*}
Choosing 
\begin{align*}
\mu = 4\frac{\tr_{>j^*}(\Sigma)}{c_1n},
\end{align*}
we get that \eqref{ThmERBoundCond} and \eqref{eq:lower:bound:mu} are satisfied for $d=j^*$, where the latter can be seen from $\tr_{>j^*}(\Sigma)\ge Bn \lambda_{j^*+1}$ and the fact that $B>1$ and $c_1\in(0,1)$. Hence, combining \eqref{EqDecompExcessRisk0} with Lemma \ref{LemExcessRiskHighProbability}, we conclude that, for all $t\ge 1$,
\begin{align*}
    \mathcal E^{PCA}_{\le j^*}(0)\le 2C_1t^2\frac{j^*\operatorname{tr}_{>r}(\Sigma)}{n}+4\frac{j^*\tr_{>j^*}(\Sigma)}{c_1n}
\end{align*}
with probability at least $1-2e^{1-c_2n/4}-e^{-t}$.
Inserting 
\begin{align*}
    \tr_{>r}(\Sigma) &\le (j^*-r)\mu + \tr_{>j^*}(\Sigma) = \Big(\frac{4(j^*-r)}{c_1n}+1\Big) \tr_{>j^*}(\Sigma) \le 2 \tr_{>j^*}(\Sigma),
\end{align*}
the claim follows.
\end{proof}


\section{Proofs}\label{SecProofs}

\subsection{Proof of Lemma \ref{LemBVDecCond}}\label{LemBVDecCondProof}

Inserting $\mathbf{Y}=S_nf+\boldsymbol{\epsilon}$ into \eqref{EqRepPCREst}, we have
\begin{equation}\label{EqDecEstPCR}
\EstPCR=\hat P_{\le d}f+n^{-1/2}\sum_{j\le d}\hat\lambda_j^{-1/2}\langle \boldsymbol{\epsilon},\hat{v}_j\rangle \hat u_j,
\end{equation}
provided that $\hat \lambda_d>0$. Inserting \eqref{EqDecEstPCR} and the identity $f=\hat P_{\le d}f+\hat P_{> d}f$ into \eqref{EqPredLoss}, we get
\begin{align*}
\|\EstPCR-f\|_{L^2(\mathbb{P}^X)}^2&=\langle \hat P_{>d}f,\Sigma \hat P_{>d}f\rangle
-2n^{-1/2}\sum_{j\le d}\hat\lambda_j^{-1/2}\langle \hat P_{> d}f, \Sigma \hat u_j\rangle\langle \boldsymbol{\epsilon},\hat{v}_j\rangle\\
&+n^{-1}\sum_{j\le d}\sum_{k\le d}\hat\lambda_j^{-1/2}\hat\lambda_k^{-1/2}\langle\hat u_j,\Sigma \hat u_k\rangle\langle \boldsymbol{\epsilon},\hat{v}_j\rangle\langle \boldsymbol{\epsilon},\hat{v}_k\rangle.
\end{align*}
The result now follows from the fact that, conditional on the design, the $\langle \boldsymbol{\epsilon}, \hat v_j\rangle$ are uncorrelated, each with expectation zero and variance $\sigma^2$.\qed

\subsection{Proof of Proposition \ref{PropOracleRisk}}\label{PropOracleRiskProof}

We abbreviate $\EstORA$ by $\hat f_d$. Consider $X'=P_{\le d}X$, $X_i'=P_{\le d}X_i$ (defined on $U_d$), which lead to covariance and sample covariance $\Sigma'=P_{\le d}\Sigma P_{\le d}$, $\hat\Sigma'=P_{\le d}\hat\Sigma P_{\le d}$. In the following, we use the notation introduced in Sections~\ref{SecPCA} and \ref{SecPCR} with an additional superscript $'$. With this notation, we have 
\begin{equation}\label{EqDecOracleEst}
\hat f_d=n^{-1/2}\sum_{j\le d}\hat\lambda_j'^{-1/2}\langle \mathbf{Y},\hat{v}_j'\rangle \hat u_j',
\end{equation}
provided that $\hat\lambda_d'>0$. We define the events $\mathcal{A}_d=\{\hat\lambda_d'\ge \lambda_d/2\}$ and
\[
\mathcal{E}_d=\{(1/2)\|g\|_{L^2(\mathbb{P}^X)}^2\le \|g\|_n^2\le (3/2)\|g\|_{L^2(\mathbb{P}^X)}^2\,\forall g\in U_d\}.
\] 
Let $c_1$ and $c_2$ be the minima of the respective constants in Lemmas \ref{ThmEvDev} and \ref{LemConcIneq}. By Lemmas \ref{LemChangeNorm1.main} and \ref{LemConcIneq} (applied to the observations $X_i'$ taking values in the $d$-dimensional Hilbert space $U_d$ and $\mu=0$), we have $\mathbb{P}(\mathcal{E}_d^c)\le e^{-c_2n}$, provided that $d\le c_1n$. Moreover, by Corollary \ref{CorEVConcLB}, we have $\mathbb{P}(\mathcal{A}_d^c)\le e^{1-c_2n/4}$, provided that $d\le c_1n/4$. Hence, it remains to analyze the squared mean prediction error conditional on the design on the event $\mathcal{A}_d\cap \mathcal{E}_d$. First, by the projection theorem, we have
\begin{align*}
&\|\hat f_d-f\|_{L^2(\mathbb{P}^X)}^2=\|\hat f_d-P_{\le d}f\|_{L^2(\mathbb{P}^X)}^2+\|P_{> d}f\|_{L^2(\mathbb{P}^X)}^2.
\end{align*}
The last term is exactly the oracle bias term. Moreover, by Lemma \ref{LemChangeNorm1.main} (applied to the observations $X_i'$ taking values in the $d$-dimensional Hilbert space $U_d$ and $\mu=0$), we have 
\begin{align*}
\|\hat f_d-P_{\le d}f\|_{n}^2\le  \frac{3}{2}\|\hat f_d-P_{\le d}f\|_{L^2(\mathbb{P}^X)}^2\le 3\|\hat f_d-P_{\le d}f\|_{n}^2.
\end{align*}
Letting $\hat\Pi_n:\mathbb{R}^n\rightarrow S_nU_d,\mathbf{y}\mapsto \argmin_{g\in  U_d}\|\mathbf{y}-S_ng\|_n^2$ be the orthogonal projection onto $S_nU_d$, we have, on the event $\mathcal{A}_d\cap \mathcal{E}_d$,
\begin{align*}
&\mathbb{E}(\|\hat f_d-P_{\le d}f\|_n^2\,|\,X_1,\dots,X_n) \\
&= \|\hat\Pi_n f-P_{\le d}f\|_n^2+\mathbb{E}(\|\hat\Pi_n\boldsymbol{\epsilon}\|_n^2\,|\,X_1,\dots,X_n)\\
&=\|\hat\Pi_n f-P_{\le d}f\|_n^2+\sigma^2dn^{-1}\le  \| P_{> d}f\|_{n}^2+\sigma^2dn^{-1}.
\end{align*}
By Assumption \ref{SubGauss}, $\langle X,P_{>d}f\rangle$ is sub-Gaussian with factor $\SGN \|P_{>d}f\|_{L^2(\mathbb{P}^X)}$. By Bernstein's inequality (see, e.g., Theorem 2.8.1 and Lemma 2.7.6 in \cite{MR3837109}), we get that 
\begin{align*}
\| P_{> d}f\|_{n}^2-\| P_{> d}f\|_{L^2(\mathbb{P}^X)}^2\le CL^2\| P_{> d}f\|_{L^2(\mathbb{P}^X)}^2
\end{align*}
with probability at least $1-e^{-n}$. The claim follows from collecting all these inequalities.\qed

\subsection{Proof of Theorem \ref{MainResultClassical}}\label{MainResultClassicalProof}

The proof is based on combining Lemma \ref{LemBVDecCond} with the perturbation bounds from Section \ref{SecTools}. Let $c_1$ and $c_2$ be the minima of the respective constants in Lemmas \ref{ThmEvDev} and \ref{LemConcIneq}.
First, consider the variance term $V_d$. By \eqref{MainResultClassicalCond} and Corollary~\ref{CorChangeNormEV}, we have 
\begin{align*}
\forall j\le d,\quad \|\hat u_j\|_{L^2(\mathbb{P}^X)}^2& \le 4\lambda_j
\end{align*}
with probability at least $1-e^{-c_2n}$, provided that $c\le c_1/4$. Moreover, by \eqref{MainResultClassicalCond}, Corollary~\ref{CorEVConcLB} and the union bound, we have
\begin{equation*}
\forall j\le d,\quad  \hat\lambda_j\ge   \lambda_j/2
\end{equation*}
with probability at least $1- cne^{1-c_2n/4}$. Combining both inequalities, we get $V_d\le 8\sigma^2 d/n$ with probability at least $1- 2cne^{1-c_2n/4}$. Next, consider the bias term $B_d^2(f)$. Corollary~\ref{CorChangeNormFunction} implies
\begin{align*}
    B_d^2(f) = \| \hat P_{>d}f \|^2_{L^2(\mathbb{P}^X)} \le 2 \| \hat P_{>d}f \|^2_n + 2 \frac{\tr_{>d}(\Sigma)}{c_1n} \| \hat P_{>d}f \|^2
\end{align*}
with probability at least $1-e^{-c_2n}$. Observe that $\| \hat P_{>d}f \|^2_n \le \hat{\lambda}_{d+1} \| f \|^2$ and $\| \hat P_{>d}f \|^2 \le \| f \|^2$. Using also Corollary \ref{CorChangeNormEV} and \eqref{MainResultClassicalCond}, we get $B_d^2(f)\le 4\lambda_{d+1}\| f \|^2$ with probability at least $1-e^{-c_2n}$, provided again that $c\le c_1/4$. This completes the proof. \qed

\subsection{Proof of Theorem \ref{BenignOverfittingResult}} \label{ProofBenignOverfittingResult}

Similarly as in the proof of Theorem \ref{MainResultClassical}, consider first the variance term $V_d$. Observe that
\begin{align}
    V_d
    &= \frac{\sigma^2}{n} \sum_{j = 1}^{j^*} \frac{\|\hat{u}_j\|^2_{L^2(\mathbb{P}^X)}}{\hat{\lambda}_j} + \frac{\sigma^2}{n} \sum_{j = j^*+1}^d \frac{\tr(\hat{P}_j \Sigma)}{\hat{\lambda}_j} \nonumber\\
    &\le \frac{\sigma^2}{n} \sum_{j = 1}^{j^*} \frac{\|\hat{u}_j\|^2_{L^2(\mathbb{P}^X)}}{\hat{\lambda}_j} + \frac{\sigma^2}{n} \frac{1}{\hat\lambda_n}\sum_{j = j^*+1}^d \tr(\hat{P}_j \Sigma)\nonumber\\
    &\le \frac{\sigma^2}{n} \sum_{j = 1}^{j^*} \frac{\|\hat{u}_j\|^2_{L^2(\mathbb{P}^X)}}{\hat{\lambda}_j} + \frac{\sigma^2}{n} \frac{1}{\hat{\lambda}_n} \bigg(\mathcal{E}_{\le j^*}^{PCA}(0) + \sum_{j=j^*+1}^d \tr(\hat{P}_j \Sigma_{>j^*})\bigg),\label{eq:variance:benign:decomp}
\end{align}
where we used 
\begin{align*}
    \|\hat{u}_j\|^2_{L^2(\mathbb{P}^X)}=\langle \hat u_j,\Sigma\hat u_j\rangle=\tr(\hat{P}_j \Sigma)
\end{align*}
in the first equality and
\begin{align*}
    \lambda_k\sum_{j=j^*+1}^d\tr(P_k\hat{P}_j )\le \lambda_k\tr(P_k\hat{P}_{>j^*} )=\lambda_k\|P_k\hat{P}_{>j^*}\|_2^2,\qquad k\le j^*,  
\end{align*}
and the definition of $\mathcal{E}_{\le j^*}^{PCA}(0)$ in the last inequality.
Let $c_2$ be the minimum of the constants in Lemmas \ref{ThmEvDev}, \ref{LemBartlett}, \ref{LemConcIneq} and \ref{LemExcessRiskHighProbability}. Lemma \ref{LemBartlett} and \eqref{eq:change:remainder:trace} yield
\begin{align}
    \hat{\lambda}_n \ge \frac{1}{2} \frac{\tr_{>n}(\Sigma)}{n} \ge \frac{1}{4} \frac{\tr_{>j^*}(\Sigma)}{n} \label{ineq:lower:bound:lambda_n:hat}
\end{align}
with probability at least $1-e^{1-c_2n}$, while Corollary \ref{CorExcessRisk0} implies, for all $1\le t \le c_2n/4$,
\begin{align}
    {\mathcal{E}}_{\le j^*}^{PCA}(0) \le C t^2 \frac{j^* \tr_{>j^*}(\Sigma)}{n} \label{ineq:bound:excess:risk}
\end{align}
with probability at least $1-3e^{1-t}$. Moreover, Corollaries \ref{CorEVConcLB} and \ref{CorChangeNormFunction} and the union bound yield, for all $j\le j^*$,
\begin{align}
    \hat{\lambda}_j^{-1}\|\hat{u}_j\|^2_{L^2(\mathbb{P}^X)} \le \hat{\lambda}_j^{-1}\Big(2\hat{\lambda}_{j}+2\frac{ \tr_{>j^*}(\Sigma)}{c_1n}\Big) \le 2+4 \frac{\tr_{>j^*}(\Sigma)}{c_1n \lambda_{j^*}} \le
    2+\frac{4B}{c_1} \label{ineq:bound:first:sum}
\end{align}
with probability at least $1-cne^{1-c_2n/4}-e^{-c_2n}$, where $c_1$ is the constant in Lemma \ref{LemConcIneq}. Inserting \eqref{ineq:lower:bound:lambda_n:hat}--\eqref{ineq:bound:first:sum} into \eqref{eq:variance:benign:decomp}, we arrive at
\begin{align}
    V_d\le C\Big(t^2\frac{\sigma^2j^*}{n}+\frac{\sigma^2}{\tr_{>j^*}(\Sigma)}\sum_{j=j^*+1}^d \tr(\hat{P}_j \Sigma_{>j^*})\Big) \label{ineq:upper:bound:variance}
\end{align}
with probability at least $1-Ce^{-ct}$, where $c,C>0$ are constants depending only on $c_1,c_2$ and $B$. Next, consider the bias term $B_d^2(f)$. Arguing as in the proof of Theorem~\ref{MainResultClassical}, Corollaries \ref{CorChangeNormFunction} and \ref{CorChangeNormEV} and the fact that $j^*\le cn$ imply that
\begin{align*}
    B_d^2(f)\le B_{j^*}^2(f)  \le 3 \lambda_{j^*+1}\| f \|^2 + 4 \frac{\tr_{>j^*}(\Sigma)}{c_1n} \| f \|^2\le \Big(\frac{3}{B}+\frac{4}{c_1}\Big)\frac{\tr_{>j^*}(\Sigma)}{n}\| f \|^2
\end{align*}
with probability at least $1-e^{-c_2n}$. This completes the proof of the first claim.

It remains to prove the second claim. Observing that $\tr(\hat{P}_j \Sigma_{>j^*}) \ge 0$ for all $j$, we have
\begin{align}
    \sum_{j=j^*+1}^n \tr(\hat{P}_j \Sigma_{>j^*}) &\le \frac{1}{\hat{\lambda}_n} \sum_{j=1}^n \tr(\hat{\lambda}_j \hat{P}_j \Sigma_{>j^*}) = \frac{1}{\hat{\lambda}_n} \tr(\hat{\Sigma} \Sigma_{>j^*})\label{ineq:bound:second:part:second:sum}
\end{align}
and
\begin{align*}
    \tr(\hat{\Sigma} \Sigma_{>j^*}) = \sum_{j>j^*} \lambda_j^2 \Big( \frac{1}{n} \sum_{i=1}^n \eta_{ji}^2\Big)
\end{align*}
with Karhunen-Loève coefficients $$\eta_{ji}=\lambda_j^{-1/2} \langle X_i,u_j \rangle,\quad j \ge 1,1\le i\le n.$$ 
By Assumption \ref{SubGauss}, we have $\sup_{j\ge 1} \| \eta_{ji} \|_{\psi_2} \le L$ for all $i$. This implies by centering (see, e.g., Remark 5.18 in \cite{V12})
\begin{align*}
    \| \eta_{ji}^2-1 \|_{\psi_1} \le 2 \| \eta_{ji}^2 \|_{\psi_1} \le 4 \| \eta_{ji} \|_{\psi_2}^2 \le 4L^2.
\end{align*}
Thus, $(\eta_{ji}^2-1)$ is a sequence of real-valued centered random variables such that $(\eta_{ji}^2-1)_{i=1}^n$ are i.i.d.~for each $j > j^*$ and such that $\sup_{j>j^*} \|\eta_{ji}^2-1\|_{\psi_1} \le 4L^2$. By the second bound in Lemma \ref{LemConcWeightedNorm} below applied with $t=n$, we have
\begin{align*}
    \sum_{j > j^*} \lambda_j^2 \Big( \frac{1}{n} \sum_{i=1}^n  \eta_{ji}^2 - 1  \Big) \le 2C  \tr_{>j^*}(\Sigma^2)
\end{align*}
with probability at least $1-e^{-n}$.
Hence, we get
\begin{align}
    \tr (\hat{\Sigma} \Sigma_{>j^*}) \le (1+2C)\tr_{>j^*}(\Sigma^2) \label{ineq:bound:tr:Sigma_hat*Sigma>j*}
\end{align}
with probability at least $1-e^{-n}$. Combining \eqref{ineq:lower:bound:lambda_n:hat}, \eqref{ineq:bound:second:part:second:sum} and \eqref{ineq:bound:tr:Sigma_hat*Sigma>j*}, the second claim follows. \qed

\subsection{Proof of Lemma \ref{LemExcessRiskHighProbability}}
\label{ProofLemExcessRiskHighProbability}

\begin{lemma} \label{lemma_excess_risk_bounds_det} Let $d\ge  1$ and $\mu\ge \lambda_{d+1}$. Let $r\le d$ be the largest integer such that $\lambda_r>\mu$ (set $r=0$ if $\mu\ge \lambda_1$) and let $S_{\le r}=\sum_{j\le r} (\lambda_j - \lambda_{d+1})^{-1/2}P_j$. Then, on the joint event
 \begin{align}\label{EventRW}
 \{\|S_{\le r}(\hat \Sigma - \Sigma)S_{\le r}\|_{\infty} \le 1/4\} \cap \{\hat{\lambda}_{d+1} - \lambda_{d+1} \le (\lambda_r- \lambda_{d+1})/2\},
\end{align}
we have
\begin{align*}
\mathcal E^{PCA}_{\le d}(\mu)\le 16\sum_{j\le r}\frac{\|P_j (\hat \Sigma - \Sigma) P_{>r}\|_2^2}{\lambda_j-\lambda_{d+1}}.
\end{align*}
\end{lemma}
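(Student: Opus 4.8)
The plan is to bound $\mathcal E^{PCA}_{\le d}(\mu)$ in terms of the cross-coefficients $a_{jk}:=\langle u_j,\hat u_k\rangle$ with $j\le r$ and $k>d$, and then to control these coefficients by inverting a small (at most $r$-dimensional) linear system. First, since $\mathcal E^{PCA}_{\le d}(\mu)=\sum_{j\le d}(\lambda_j-\mu)\|P_j\hat P_{>d}\|_2^2$ and $\lambda_j\le\mu$ for $r<j\le d$ by the definition of $r$, only the indices $j\le r$ contribute a nonnegative amount; there $0\le\lambda_j-\mu\le\lambda_j-\lambda_{d+1}$, as $\mu\ge\lambda_{d+1}$. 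Writing $\|P_j\hat P_{>d}\|_2^2=\sum_{k>d}a_{jk}^2$ and $D:=\operatorname{diag}((\lambda_j-\lambda_{d+1})^{-1/2})_{j\le r}$, it therefore suffices to prove
\[
\sum_{k>d}a_{\cdot k}^{\top}D^{-2}a_{\cdot k}\le 16\sum_{j\le r}\frac{\|P_j(\hat\Sigma-\Sigma)P_{>r}\|_2^2}{\lambda_j-\lambda_{d+1}},\qquad a_{\cdot k}:=(a_{jk})_{j\le r}.
\]
(If $r=0$ then $\mathcal E^{PCA}_{\le d}(\mu)\le 0$ and the claim is trivial, so assume $r\ge 1$.)

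Next I derive the linear system. Testing the eigenrelation $\hat\Sigma\hat u_k=\hat\lambda_k\hat u_k$ against $u_j$, inserting $\hat\Sigma=\Sigma+(\hat\Sigma-\Sigma)$ together with $\hat u_k=P_{\le r}\hat u_k+P_{>r}\hat u_k$, and using $\Sigma u_j=\lambda_j u_j$, gives for each $j\le r$
\[
(\lambda_j-\hat\lambda_k)\,a_{jk}+\sum_{l\le r}M_{jl}\,a_{lk}=-b_{jk},\quad M_{jl}:=\langle u_j,(\hat\Sigma-\Sigma)u_l\rangle,\ \ b_{jk}:=\langle u_j,(\hat\Sigma-\Sigma)P_{>r}\hat u_k\rangle .
\]
In matrix form, $(\Lambda_k+M)a_{\cdot k}=-b_{\cdot k}$ with $\Lambda_k:=\operatorname{diag}((\lambda_j-\hat\lambda_k)_{j\le r})$ and $M:=(M_{jl})_{j,l\le r}$ symmetric (as $\hat\Sigma-\Sigma$ is self-adjoint). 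Conjugating by $D$ and setting $\tilde a_{\cdot k}:=D^{-1}a_{\cdot k}$ yields $(D\Lambda_k D+DMD)\tilde a_{\cdot k}=-Db_{\cdot k}$; here $D\Lambda_k D$ is diagonal and $DMD$ is precisely the matrix of $S_{\le r}(\hat\Sigma-\Sigma)S_{\le r}$ in the basis $u_1,\dots,u_r$.

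Now I invert on the joint event \eqref{EventRW}. On its second part, for $k>d$ one has $\hat\lambda_k\le\hat\lambda_{d+1}\le\lambda_{d+1}+(\lambda_r-\lambda_{d+1})/2\le\lambda_{d+1}+(\lambda_j-\lambda_{d+1})/2$ for every $j\le r$ (since $\lambda_j\ge\lambda_r$), hence $\lambda_j-\hat\lambda_k\ge(\lambda_j-\lambda_{d+1})/2>0$, so $D\Lambda_k D$ has all diagonal entries $\ge 1/2$. On the first part of \eqref{EventRW}, $\|DMD\|_\infty=\|S_{\le r}(\hat\Sigma-\Sigma)S_{\le r}\|_\infty\le 1/4$. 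Consequently $D\Lambda_k D+DMD$ is symmetric with smallest eigenvalue $\ge 1/2-1/4=1/4$, whence $\|\tilde a_{\cdot k}\|\le 4\|Db_{\cdot k}\|$ and
\[
a_{\cdot k}^{\top}D^{-2}a_{\cdot k}=\|\tilde a_{\cdot k}\|^2\le 16\|Db_{\cdot k}\|^2=16\sum_{j\le r}\frac{b_{jk}^2}{\lambda_j-\lambda_{d+1}} .
\]
Summing over $k>d$ and using $\sum_{k>d}b_{jk}^2=\sum_{k>d}\langle P_{>r}(\hat\Sigma-\Sigma)u_j,\hat u_k\rangle^2\le\|P_{>r}(\hat\Sigma-\Sigma)u_j\|^2=\|P_j(\hat\Sigma-\Sigma)P_{>r}\|_2^2$ (Bessel's inequality for the orthonormal basis $\{\hat u_k\}$, then self-adjointness) gives the displayed inequality, and hence the lemma.

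The step I expect to be the main obstacle is the self-interaction term $\sum_{l\le r}M_{jl}a_{lk}$: it prevents estimating each $a_{jk}$ individually, since a naive Davis--Kahan argument would only yield a near-orthogonality statement between the whole top-$r$ and bottom eigenspaces. Treating all $j\le r$ at once as the above linear system removes this difficulty, because after the $D$-conjugation the diagonal block dominates (eigenvalues $\ge 1/2$, from the eigenvalue-separation event) while the symmetric perturbation $DMD$ is small (operator norm $\le 1/4$, from the first event). A minor point to verify en route is that $\hat\Sigma$, being finite-rank, positive and self-adjoint, admits an orthonormal eigenbasis $\{\hat u_k\}_{k\ge 1}$ of $\mathcal H$, which legitimizes the Bessel estimate $\sum_{k>d}\langle v,\hat u_k\rangle^2\le\|v\|^2$ used above. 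This reproduces the deterministic core of Proposition~2.10 in \cite{RW17}.
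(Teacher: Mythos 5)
Your proof is correct and takes essentially the same route as the paper: the opening reduction $\mathcal E^{PCA}_{\le d}(\mu)\le \sum_{j\le r}(\lambda_j-\lambda_{d+1})\|P_j\hat P_{>d}\|_2^2$ is identical, and the rest is a self-contained reconstruction of the deterministic argument from the proof of Proposition 3.5 in \cite{RW17}, which the paper simply cites. Your linear-system inversion (diagonal entries $\ge 1/2$ from the eigenvalue-separation event, symmetric perturbation of operator norm $\le 1/4$ from the event on $S_{\le r}(\hat\Sigma-\Sigma)S_{\le r}$, then Bessel to sum $b_{jk}^2$ over $k>d$) is precisely the mechanism behind the cited bound, including the constant $16$, so nothing is missing.
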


\begin{proof}
By construction, we have
\begin{align*}
\mathcal E^{PCA}_{\le d}(\mu)\le \sum_{j\le r}(\lambda_j-\mu)\|P_j\hat P_{>d}\|_2^2\le \sum_{j\le r}(\lambda_j-\lambda_{d+1})\|P_j\hat P_{>d}\|_2^2.
\end{align*}
The last term is analyzed in the proof of Proposition 3.5 in \cite{RW17} (set $s=r$ and $\mu=\lambda_{d+1}$), where it is shown that 
\begin{align*}
\sum_{j\le r}(\lambda_j-\lambda_{d+1})\|P_j\hat P_{>d}\|_2^2\le 16\sum_{j\le r}\frac{\|P_j(\hat \Sigma - \Sigma)P_{>r}\|_2^2}{\lambda_j-\lambda_{d+1}},
\end{align*}
provided that \eqref{EventRW} holds.
\end{proof}

\begin{lemma}\label{LemConcHSNorm}
Grant Assumption \ref{SubGauss}. Then, for any $t\ge  1$, we have
\begin{align*}
\sum_{j\le r}\frac{\|P_j(\hat \Sigma - \Sigma)P_{>r}\|_2^2}{\lambda_j-\lambda_{d+1}}\le C\frac{t^2}{n}\sum_{j\le r}\frac{\lambda_j\operatorname{tr}_{>r}(\Sigma)}{\lambda_j-\lambda_{d+1}}
\end{align*}
with probability at least $1-e^{-t}$.
\end{lemma}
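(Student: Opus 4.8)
The plan is to rewrite the left-hand side as the squared Hilbert--Schmidt norm of an explicit sum of i.i.d.\ mean-zero random operators, to compute its second moment, and then to bootstrap to the stated high-probability bound by the moment method. Throughout, $C$ denotes a constant depending only on $\SGN$ that may change from occurrence to occurrence.

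\emph{Reduction.} Put $S_{\le r}=\sum_{j\le r}(\lambda_j-\lambda_{d+1})^{-1/2}P_j$, which is well defined since $\lambda_j>\lambda_{d+1}$ for $1\le j\le r$ (if $r=0$ the claim is trivial). As the operators $P_j(\hat\Sigma-\Sigma)P_{>r}$, $j\le r$, have mutually orthogonal ranges, the left-hand side equals $\|S_{\le r}(\hat\Sigma-\Sigma)P_{>r}\|_2^2$. Since $P_j\Sigma P_{>r}=0$, we have $V:=S_{\le r}(\hat\Sigma-\Sigma)P_{>r}=S_{\le r}\hat\Sigma P_{>r}=\tfrac1n\sum_{i=1}^n A_i$ with $A_i=(S_{\le r}X_i)\otimes(P_{>r}X_i)$, an i.i.d.\ sequence with $\mathbb E A_i=S_{\le r}\Sigma P_{>r}=0$. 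Expanding,
\[
\|V\|_2^2=\underbrace{\tfrac1{n^2}\sum_{i=1}^n U_i}_{=:D}\;+\;\underbrace{\tfrac1{n^2}\sum_{i\ne i'}\langle A_i,A_{i'}\rangle}_{=:F},\qquad U_i:=\|S_{\le r}X_i\|^2\,\|P_{>r}X_i\|^2 .
\]
Because $\mathbb E F=0$, the second moment of $\|V\|_2^2$ equals $\mathbb E D=\tfrac1n\mathbb E U_1$, and from $\mathbb E[\langle u_j,X\rangle^2\langle u_k,X\rangle^2]\le C\lambda_j\lambda_k$ (Cauchy--Schwarz and Assumption~\ref{SubGauss}),
\[
\mathbb E U_1=\sum_{j\le r}\sum_{k>r}\frac{\mathbb E[\langle u_j,X\rangle^2\langle u_k,X\rangle^2]}{\lambda_j-\lambda_{d+1}}\le C\,A,\qquad A:=\Big(\sum_{j\le r}\frac{\lambda_j}{\lambda_j-\lambda_{d+1}}\Big)\operatorname{tr}_{>r}(\Sigma),
\]
which is the stated bound for $t=1$; it remains to control higher moments.

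\emph{Moment method.} The point is that, owing to the positivity of the weights $(\lambda_j-\lambda_{d+1})^{-1}$ and $\lambda_k$, Minkowski's inequality in $L^p$ applies to $\|S_{\le r}X\|^2=\sum_{j\le r}(\lambda_j-\lambda_{d+1})^{-1}\langle u_j,X\rangle^2$ and $\|P_{>r}X\|^2=\sum_{k>r}\langle u_k,X\rangle^2$ \emph{without} any independence assumption on the Karhunen--Lo\`eve coefficients; combined with $\|\langle u_m,X\rangle\|_{\psi_2}\le\SGN\lambda_m^{1/2}$ this yields $\|\,\|S_{\le r}X\|\,\|_{\psi_2}\le C(\sum_{j\le r}\tfrac{\lambda_j}{\lambda_j-\lambda_{d+1}})^{1/2}$, $\|\,\|P_{>r}X\|\,\|_{\psi_2}\le C\operatorname{tr}_{>r}(\Sigma)^{1/2}$, and hence $\|U_1\|_{\psi_{1/2}}\le CA$. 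For the diagonal part, the $U_i$ are i.i.d.\ nonnegative with mean and $\psi_{1/2}$-quasinorm at most $CA$, so standard moment bounds for sums of such variables give $(\mathbb E D^q)^{1/q}\le CA(n^{-1}+q^2n^{-2})$ for all $q\ge1$. For the off-diagonal part, $F$ is a canonical (completely degenerate) $U$-statistic of order two with kernel $h(x,x')=\langle S_{\le r}x,S_{\le r}x'\rangle\langle P_{>r}x,P_{>r}x'\rangle$, so decoupling gives $\mathbb E|F|^q\le C^q\,\mathbb E|\langle V,V'\rangle|^q$ with $V'$ an independent copy of $V$. Conditionally on $V$, $\langle V,V'\rangle=\tfrac1n\sum_{i'}\langle V,A'_{i'}\rangle$ is a sum of i.i.d.\ centered variables with $|\langle V,A'_{i'}\rangle|\le\|V\|_2\,\|S_{\le r}X'_{i'}\|\,\|P_{>r}X'_{i'}\|$ (using $\|V\|_\infty\le\|V\|_2$), whose conditional $L^2$- and $\psi_1$-norms are therefore both $\le C\|V\|_2A^{1/2}$; Bernstein's moment inequality then gives $\mathbb E[\,|\langle V,V'\rangle|^q\mid V\,]\le C^q(\sqrt{q/n}+q/n)^qA^{q/2}\|V\|_2^q$. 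Taking expectations, using $\mathbb E\|V\|_2^q\le(\mathbb E\|V\|_2^{2q})^{1/2}$, and writing $\Phi_q:=(\mathbb E\|V\|_2^{2q})^{1/q}$, we arrive at the self-referential inequality
\[
\Phi_q^q\le C^q\,\mathbb E D^q+\big(C\sqrt A\,(\sqrt{q/n}+q/n)\big)^q\,\Phi_q^{q/2},
\]
which solves to $\Phi_q\le CA(n^{-1}+qn^{-1}+q^2n^{-2})$. Hence $\mathbb E\|V\|_2^{2q}\le\big(CA\max(q/n,q^2/n^2)\big)^q$, and Markov's inequality with $q=t$, together with $\max(t/n,t^2/n^2)\le t^2/n$ for $t\ge1$, gives $\|V\|_2^2\le CAt^2/n$ with probability at least $1-e^{-t}$, which is the assertion.

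\emph{Main obstacle.} The crux is the off-diagonal term $F$: it is a completely degenerate order-two $U$-statistic whose kernel is itself quadratic in each argument, hence essentially a degree-four chaos, and a cruder approach --- symmetrisation followed by Khintchine's inequality in the Hilbert space of Hilbert--Schmidt operators --- loses a factor and only yields a bound of order $t^3/n^2$. Decoupling avoids this, but its conditional Bernstein estimate has scale parameters proportional to $\|V\|_2$ itself, so one must close the fixed-point inequality for $\Phi_q$ above in order to extract the correct $\psi_{1/2}$-type tail; the remaining work is careful bookkeeping of $\psi_\alpha$-quasinorms. Alternatively one could invoke a general polynomial-chaos concentration inequality for the quartic form $\|V\|_2^2$, but its moment parameters would still have to be shown to be of order $A/n$.
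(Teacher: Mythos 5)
Your argument is correct in substance, but it takes a genuinely different and considerably heavier route than the paper. The paper avoids all U-statistic structure: it expands the left-hand side coordinatewise as $\sum_{j\le r}\sum_{k>r}\frac{\lambda_j\lambda_k}{\lambda_j-\lambda_{d+1}}\big(\frac1n\sum_{i=1}^n\eta_{ji}\eta_{ki}\big)^2$ with $\eta_{ji}=\lambda_j^{-1/2}\langle X_i,u_j\rangle$, observes that for $j\le r<k$ each $\eta_{ji}\eta_{ki}$ is centered (orthogonality $\langle u_j,\Sigma u_k\rangle=0$; no independence of the Karhunen--Lo\`eve coefficients is needed) with $\psi_1$-norm at most $2\SGN^2$, and then invokes the generic weighted-sum bound of Lemma~\ref{LemConcWeightedNorm}: Minkowski's inequality pulls the $L^p$-norm inside the double sum and Bernstein's moment inequality bounds each $\mathbb{E}^{1/p}\big(\frac1n\sum_i\eta_{ji}\eta_{ki}\big)^{2p}$ by $Cp^2/n$, yielding a $\psi_{1/2}$-bound of order $n^{-1}\sum_{j\le r}\frac{\lambda_j\operatorname{tr}_{>r}(\Sigma)}{\lambda_j-\lambda_{d+1}}$ for the whole expression and hence exactly the stated $t^2/n$ tail. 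In other words, the ``crude'' term-by-term treatment you dismiss in your closing paragraph loses nothing here, because the target bound is itself of $\psi_{1/2}$ type; no exploitation of the degeneracy is required. Your operator-level scheme ($\|V\|_2^2=D+F$, decoupling of the degenerate part, conditional Bernstein, fixed-point inequality for $\Phi_q$) is sound and buys something the paper does not attempt: your intermediate estimate $\Phi_q\lesssim A(q/n+q^2/n^2)$ would give the sharper tail $A(t/n+t^2/n^2)$ for moderate $t$. Two points to tighten if you pursue it: decoupling $\sum_{i\ne i'}\langle A_i,A_{i'}\rangle$ produces $\langle V,V'\rangle$ minus the diagonal correction $n^{-2}\sum_i\langle A_i,A_i'\rangle$, which must be bounded separately (it is a sum of independent centered variables with $\psi_{1/2}$-norm $\le CA$ and is of the same order as $D$, so this is easy but should not be omitted); and the quoted bound $(\mathbb{E}D^q)^{1/q}\le CA(n^{-1}+q^2n^{-2})$ for sums of i.i.d.\ nonnegative $\psi_{1/2}$-variables may pick up logarithmic corrections, though for the stated conclusion the crude estimate $(\mathbb{E}D^q)^{1/q}\le CAq^2/n$, which follows from the triangle inequality alone, already suffices.
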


\begin{proof}
We have
\begin{align*}
\sum_{j\le r}\frac{\|P_j(\hat \Sigma - \Sigma)P_{>r}\|_2^2}{\lambda_j-\lambda_{d+1}}=\sum_{j\le r}\sum_{k>r}\frac{\lambda_j\lambda_k}{\lambda_j-\lambda_{d+1}}\Big(\frac{1}{n}\sum_{i=1}^n\eta_{ji}\eta_{ki}\Big)^2,
\end{align*}
where $\eta_{ji}=\lambda_j^{-1/2}\langle X_i,u_j\rangle$. The claim now follows from the first bound in Lemma~\ref{LemConcWeightedNorm} below by proceeding similarly as in the proof of Theorem \ref{BenignOverfittingResult}.
\end{proof}

We now finish the proof of Lemma \ref{LemExcessRiskHighProbability}. Applying Lemma \ref{ThmEvDev} with $j=d+1$ and $y=(\lambda_{r}-\lambda_{d+1})/(2\lambda_{d+1})$, we get 
\begin{align*}
\mathbb{P}(\hat{\lambda}_{d+1} - \lambda_{d+1} > (\lambda_{r}- \lambda_{d+1})/2)&\le \exp\big(1-c_2n(y \wedge y^2)\big) \\
&\le \exp\Big(1-\frac{c_2}{4} \frac{n(\lambda_r-\lambda_{d+1})^2}{\lambda_r^2}\Big),
\end{align*}
provided that 
\begin{align*}
\max\Big(\frac{2\lambda_{d+1}}{\lambda_{r}-\lambda_{d+1}},1\Big)\sum_{k>d+1}\frac{\lambda_k}{\lambda_{d+1}-\lambda_{k}+y\lambda_{d+1}}\le c_1n.
\end{align*}
This is the case if
\begin{align*}
    \frac{\lambda_r}{\lambda_r-\lambda_{d+1}} \sum_{k>d+1} \frac{\lambda_k}{\lambda_r-\lambda_k} \le c_1n/4.
\end{align*} 
Moreover, proceeding as in the proof of Lemma \ref{LemConcIneq}, there are constants $c_1, c_2 \in (0,1)$ such that
\begin{align*}
\mathbb{P}(\|S_{\le r}(\hat \Sigma - \Sigma)S_{\le r}\|_{\infty} > 1/4)\le \exp\Big(1-c_2\frac{n(\lambda_r-\lambda_{d+1})^2}{\lambda_r^2}\Big),
\end{align*}
provided that 
\begin{align*}
\frac{\lambda_r}{\lambda_r-\lambda_{d+1}}\sum_{j\le r}\frac{\lambda_j}{\lambda_j-\lambda_{d+1}}\le c_1n.
\end{align*}
Combining these concentration inequalities with Lemmas \ref{lemma_excess_risk_bounds_det} and \ref{LemConcHSNorm}, the claim follows. \qed


\section*{Acknowledgements}
We are grateful for the helpful comments by the referee. The research of MW has been partially funded by Deutsche Forschungsgemeinschaft (DFG)~- SFB1294 - 318763901. LH was supported by Deutsche Forschungsgemeinschaft (DFG) - FOR5381 - 460867398.


\bibliographystyle{plain}
\bibliography{references.bib}


\appendix


\section{Concentration inequalities}

The following lemma provides two  concentration inequalities for (squared) sums of sub-exponential random variables, adapted for our purposes.

\begin{lemma}\label{LemConcWeightedNorm}
Let $(a_j)$ be a sequence of nonnegative real numbers such that $\|a\|_1=\sum_{j=1}^\infty a_j<\infty$. Let $(X_{ij})$ be a sequence of real-valued centered random variables with $j\ge 1$ and $1\le i\le n$ such that $(X_{ij})_{i=1}^n$ are i.i.d.~for each $j\ge 1$ and such that $\sup_{j\ge 1}\|X_{1j}\|_{\psi_1}=L<\infty$. Then there exists a constant $C$ depending only on $L$ such that, for every $t\ge  1$, we have
\begin{align*}
\sum_{j\ge 1}a_j\Big(\frac{1}{n}\sum_{i=1}^nX_{ij}\Big)^2\le C\|a\|_1\frac{t^2}{n}
\end{align*}
and
\begin{align*}
\sum_{j\ge 1}a_j\Big(\frac{1}{n}\sum_{i=1}^nX_{ij}\Big)\le C\|a\|_1\Big(\frac{\sqrt{t}}{\sqrt{n}}+\frac{t}{n}\Big)
\end{align*}
each with probability at least $1-e^{-t}$.
\end{lemma}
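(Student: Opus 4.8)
The plan is to use the moment ($L^p$) method rather than a union bound, since the index set $\{j\ge 1\}$ is (possibly) infinite while the two sums are, respectively, linear and quadratic in the averages $Z_j:=n^{-1}\sum_{i=1}^n X_{ij}$. The only external input I would need is the standard moment estimate for a sum of i.i.d.\ centered $\psi_1$ random variables: using $\mathbb{E}X_{1j}^2\le 4L^2$ together with $\max_i\|X_{ij}\|_{\psi_1}\le L$, Bernstein's inequality (see, e.g., \cite{V12}) — or equivalently a Rosenthal-type moment bound — gives, for every $p\ge 1$,
\begin{align*}
\Big\|\sum_{i=1}^n X_{ij}\Big\|_p\le C_0\big(\sqrt{p}\,\sqrt{n}\,L+pL\big),\qquad\text{hence}\qquad \|Z_j\|_p\le C_0 L\Big(\sqrt{\tfrac{p}{n}}+\tfrac{p}{n}\Big),
\end{align*}
with $C_0$ an absolute constant.

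For the second inequality I would set $T=\sum_{j\ge 1}a_j Z_j$ and apply Minkowski's inequality for $\|\cdot\|_p$ to the partial sums, passing to the limit by Fatou's lemma; since $a_j\ge 0$ this yields $\|T\|_p\le\sum_{j\ge 1}a_j\|Z_j\|_p\le C_0 L\|a\|_1(\sqrt{p/n}+p/n)$. Then I would take $p=\lceil t\rceil$, noting $1\le p\le 2t$ because $t\ge 1$, and conclude with Markov's inequality in the form $\mathbb{P}(T\ge e\|T\|_p)\le e^{-p}\le e^{-t}$; this gives the stated bound with a constant $C$ depending only on $L$.

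For the first inequality I would set $S=\sum_{j\ge 1}a_j Z_j^2$ and, again by Minkowski (now with monotone convergence for the partial sums), estimate
\begin{align*}
\|S\|_p\le\sum_{j\ge 1}a_j\|Z_j^2\|_p=\sum_{j\ge 1}a_j\|Z_j\|_{2p}^2\le C_1 L^2\|a\|_1\Big(\tfrac{p}{n}+\tfrac{p^2}{n^2}\Big)
\end{align*}
for an absolute $C_1$, where the last step uses $(\sqrt{x}+x)^2\le 3(x+x^2)$ with $x=2p/n$. Choosing $p=\lceil t\rceil\le 2t$ and using $t\ge 1$ and $n\ge 1$ to absorb both terms into $C t^2/n$, the tail bound $\mathbb{P}(S\ge e\|S\|_p)\le e^{-p}\le e^{-t}$ again closes the argument.

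I do not expect a genuine obstacle: the work is bookkeeping of absolute constants and the integer choice $p\approx t$. The one point worth flagging is that the $t^2$ (rather than $t$) in the first bound is not an artefact but the contribution of the $p^2/n^2$ term, i.e.\ the sub-exponential-squared behaviour of $Z_j^2$ once $p$ is of order $n$ or larger; for $t$ of order at most $\sqrt n$ the linear term dominates and one in fact obtains the sharper bound $C\|a\|_1 t/n$, but only the uniform form with $t^2$ is used later (e.g.\ with $t$ comparable to $n$ in the proof of Theorem \ref{BenignOverfittingResult}).
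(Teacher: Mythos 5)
Your argument is correct and follows essentially the same route as the paper: Minkowski's inequality reduces both claims to $p$-th moment bounds for the averages $n^{-1}\sum_{i=1}^n X_{ij}$, which are obtained from Bernstein-type estimates for sums of centered $\psi_1$ variables. The only cosmetic difference is the moment-to-tail step: you apply Markov's inequality at $p\approx t$, whereas the paper packages the same computation through a $\psi_{1/2}$-norm bound and a quoted moments-to-tails theorem.
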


\begin{proof}
For $p\ge  1$, Minkowski's inequality yields
\begin{align}
&\mathbb{E}^{1/p}\bigg(\sum_{j\ge 1}a_j\Big(\frac{1}{n}\sum_{i=1}^nX_{ij}\Big)^2\bigg)^p\le \sum_{j\ge 1}a_j\mathbb{E}^{1/p}\Big(\frac{1}{n}\sum_{i=1}^nX_{ij}\Big)^{2p}\label{EqLemConcHSNorm}.
\end{align}
Bernstein's inequality (see, e.g., Theorem 2.8.1 in \cite{MR3837109}) combined with Theorem 2.3 in \cite{MR3185193} yields
\begin{align} \label{EqLemConcLpNorm}
\forall p\ge  1,\qquad\mathbb{E}^{1/p}\Big(\frac{1}{n} \sum_{i=1}^nX_{ij}\Big)^{2p} &\le C(p!\, n^{-p} + (2p)!\, n^{-2p})^{1/p} \\
&\le C\Big(\sqrt{\frac{p}{n}}\vee\frac{p}{n}\Big)^2\le 
C\frac{p^2}{n} \notag
\end{align} 
with a constant $C$ depending only on $L$.
Inserting this into \eqref{EqLemConcHSNorm}, we get
\begin{align*}
\forall p\ge  1,\qquad\mathbb{E}^{1/p}\bigg(\sum_{j\ge 1}a_j\Big(\frac{1}{n}\sum_{i=1}^nX_{ij}\Big)^2\bigg)^p\le Cp^2\frac{\|a\|_1}{n},
\end{align*}
meaning that 
\begin{align}
\bigg\|\sum_{j\ge 1}a_j\Big(\frac{1}{n}\sum_{i=1}^nX_{ij}\Big)^2\bigg\|_{\psi_{1/2}}\le C\frac{\|a\|_1}{n}. \label{EqLemConcHSNorm2}
\end{align}
Now, for a nonnegative random variable $Y$ with $\|Y\|_{\psi_{1/2}}<\infty$, we have $\|Y^{1/2}\|_{\psi_1}^2\le \|Y\|_{\psi_{1/2}}$, as can be seen from the definition of the $\psi_\alpha$ norms in Section \ref{BasicNotation} and the Cauchy-Schwarz inequality. In particular, we have $\mathbb{P}(Y\ge C \|Y\|_{\psi_{1/2}}t^2)\le e^{-t}$ for every $t\ge 1$ and an absolute constant $C>0$. Combining this with \eqref{EqLemConcHSNorm2}, the first claim follows.

For the second claim, Minkowski's inequality and \eqref{EqLemConcLpNorm} yield
\begin{align*}
    \forall p \ge 1, \qquad \mathbb{E}\bigg(\sum_{j\ge 1}a_j\Big(\frac{1}{n}\sum_{i=1}^nX_{ij}\Big)\bigg)^{2p}\le C^{p} \| a \|_1^{2p} (p! \, n^{-p} + (2p)! \, n^{-2p}). 
\end{align*}
We obtain by the second part of Theorem 2.3 in \cite{MR3185193} that
\begin{align*}
    \sum_{j\ge 1}a_j\Big(\frac{1}{n}\sum_{i=1}^nX_{ij}\Big) &\le 3\sqrt{C} \| a \|_1 \Big(\sqrt{\Big( \frac{1}{n}+\frac{1}{n^2} \Big)t } + \frac{t}{n} \Big) \\
    &\le 3 \sqrt{2C}\|a\|_1\Big(\frac{\sqrt{t}}{\sqrt{n}}+\frac{t}{n}\Big)
\end{align*}
with probability at least $1-e^{-t}$.
\end{proof}

\end{document}